\newtheorem{theorem}{\bf Theorem}[section]
\newtheorem{lemma}{\bf Lemma}[section]
\newtheorem{remark}{\bf Remark}[section]
\newtheorem{definition}{\bf Definition}[section]
\newtheorem{example}{\bf Example }[section]
\begin{document}
\title{\bf Stochastic comparisons of record values based on their relative
aging}
\author{Mohamed Kayid$^{\dag,}$\thanks{Corresponding author, Email address: drkayid@ksu.edu.sa}\\
~~\\
{\small Department of Statistics and Operations Research}\\
{\small College of Science, King Saud University, P.O. Box 2455, Riyadh 11451, Saudi Arabia}\\
{\small ~~}\\
}
\date{}
\maketitle

\begin{abstract}
In this paper we examine some relative orderings of upper and lower records. It is shown that if $m> n$, the $m$th upper record ages faster than the $n$th upper record, where the data sets come from a sequence of independent and identically distributed observations from a  continuous distribution. Sufficient conditions are also obtained to see whether the $m$th
 upper record arisen from a continuous distribution ages faster in terms of the relative hazard rate than the $n$th upper record arisen from another continuous distribution. It is also shown that the reversed hazard rate of the $m$th lower record
 decreases faster than the reversed hazard rate of the $n$th lower record, when $m> n$. Preservation property of the
 relative reversed hazard rate order at lower record values is investigated. Several examples are presented to examine the results.

\vskip 4mm {\noindent {\em Key Words: Upper record values, Lower record values, Relative aging, Stochastic order.}}
\end{abstract}

\vskip 2mm

\section{Introduction}
\label{seq1}
Order statistics result from a random sample of finite size, independent of the order of the observations in the sample. The extreme values of the sample are referred to as minimum order statistic and maximum order statistic. Record values are the extreme values of a sequence of random samples that converge to the extremes of the distribution under consideration. Formally, successive observations of a single random variable, called a sequence of random variables, yield two statistics, namely the upper record value and the lower record value. Extreme amounts of a random sequence of random variables are available as record values and, as a result, record values are important statistics that are useful to show the extremes of the distribution of observations. The record values have been extensively studied in the literature thus far. Because they are found in so many academic fields such as climatology, athletics, medicine, transportation, industry and so on, records are very popular. These records serve as time capsules. Through the history of records, we can examine how science and technology have developed and judge humanity based on historical achievements in a number of fields. Numerous long-term records have inspired the development of various mathematical models that reflect related record-keeping processes and predict future record outcomes.
Records have been thoroughly studied in the literature. Considerable progress has been made in the field of stochastic orderings of record values. We cite the following sources for a selection of results: Khaledi \cite{Khaledi2005}, Hu and Zhuang \cite{Hu2006}, Raqab and Amin \cite{Raqab1996}, Ahmadi and Arghami \cite{Ahmadi2001}, Belzunce et al. \cite{Belzunce2001}, Kayid et al. \cite{Kayid2021}, Esna-Ashari et al. \cite{Esna-Ashari2023}, Aswin et al. \cite{Aswin2023} and references therein. Raqab and Asadi \cite{Raqab2008} have published more recent work on the mean residual lives of data sets considering reliability, aging aspects and other relevant properties. For more information on distribution theory and its applicability to different types of data modeled by record values, see Ahsanullah \cite{Ahsanullah1995} and Arnold et al. \cite{Arnold1998}. \bigskip

Aging is a universal phenomenon that affects both mechanical systems and living beings. The age of a living organism is the time $t$ at which this organism is still alive and functioning. The aging process of an object is usually due to the wear and tear of the object. It usually explains how a system or living being gets better or worse as it gets older. In recent decades, researchers have been intensively studying stochastic aging. Numerous ideas on stochastic aging have been developed in the literature to characterize various aging features of a system, such as the increasing failure rate (IFR), the increasing failure rate average (IFRA), etc. There are three categories of aging: no aging, negative aging and positive aging. Two publications that provide a quick overview of different aging principles are Barlow and Proschan \cite{Barlow1975} and Lai and Xie \cite{Lai2006}. Comparative aging is a useful idea that describes how one system ages relative to another and is similar to these other aging terms.\bigskip

The term ``relative aging" refers to the aging of one system compared to another. The commands for faster aging are those that compare the respective ages of the two systems. The relative age of two systems can be described by two additional sets of stochastic orders. The convex transformation order, the quantitative mean inactive time order, the star-shaped order, the superadditive order, the DMRL (decreasing mean residual life) order, the s-IFR order, and other so-called transformation orders belong to the first set of stochastic orders, which characterize whether one system ages faster than another in terms of increasing failure rate, increasing failure rate on average, new better than used, etc. Barlow and Proschan \cite{Barlow1975}, Bartoszewicz \cite{Bartoszewicz1985}, Deshpande and Kochar \cite{Deshpande1983}, Kochar and Wiens \cite{Kochar1987}, Arriaza et al \cite{Arriaza2017}, Nanda et al \cite{Nanda2017} and the references therein provide a thorough treatment of these orderings. The monotonicity of the ratios of certain reliability measures, such as the mean residual life function, the inverted hazard rate function and the hazard rate function, defines the second group of stochastic orders, often referred to as faster aging orders. We recommend reading Kalashnikov and Rachev \cite{Kalashnikov1986}, Sengupta and Deshpande \cite{Sengupta1994}, Di Crescenzo \cite{Di2000}, Finkelstein \cite{Finkelstein2006}, Razaei et al. \cite{R2015}, Hazra and Nanda \cite{Hazra2016}, Misra et al \cite{Misra2017}, Kayid et al \cite{Kayid2017} and Misra and Francis \cite{Misra2018} for information on the foundations and applicability of these orders. \\

In reliability and survival analysis, the proportional hazard (PH) model, also known as Cox's PH model (see Cox \cite{Cox1972}), is often used to analyze failure time data. Later, other models were developed, including the proportional mean remaining life model, the proportional odds model, the proportional inverse hazard rate model, and others (see Marshall and Olkin \cite{Marshall2007}, Lai and Xie \cite{Lai2006}, and Finkelstein \cite{Finkelstein2008}). The phenomenon of crossing hazards and, further, crossing mean residual lifetimes has been observed in numerous real-world scenarios (see Mantel and Stablein \cite{Mantel1988}, Pocock et al. \cite{Pocock1982} and Champlin et al. \cite{Champlin1983}). Based on the idea of relative aging, Kalashnikov and Rachev \cite{Kalashnikov1986} developed a stochastic ordering (called faster aging ordering in the hazard rate) to address this problem of crossing hazard rates. In fact, this strategy could be seen as a solid replacement for the PH model. Sengupta and Deshpande \cite{Sengupta1994} give a thorough analysis of this order. They also presented two other related types of stochastic ordering.
Subsequently, Razaei et al. \cite{R2015} offered a similar stochastic ordering in the form of the reversed hazard rate functions, while  Finkelstein \cite{Finkelstein2006} proposed a stochastic ordering based on the mean residual life functions characterizing the relative ages of two life distributions. Hazra and Nanda \cite{Hazra2016} presented some generalized orderings in this direction.\\

However, the problem of the relative aging ordering of record values has not yet been considered in the literature. The aim of the present study is to initiate such an investigation in order to develop preservation properties of the relative hazard rate order and the relative reversed hazard rate order under upper records and lower records, respectively. \medskip

The structure of the paper is as follows. In Section \ref{sec:2}, we introduce some preliminary concepts and definitions. In Section \ref{sec:3},
we investigate the preservation property of the relative hazard rate order under upper records. In Section \ref{sec:4}, preservation property of the relative reversed hazard rate order under lower records is investigated. In Section \ref{sec:5}, we conclude the paper with further remarks and illustrations on current and future research.

\section{Preliminaries}
\label{sec:2}
In this section, we bring some preliminaries that will be used throughout the paper. The definitions of the ageing faster orders utilized in our paper are provided below.

\begin{definition}
Let $X$ and $Y$ be two absolutely continuous random variables with hazard rate functions $h_{X}$ and $h_{Y}$, respectively, and reversed hazard rate functions $r_{X}$ and $r_{Y}$, respectively. It is then said that $X$ ages faster than $Y$ in

\begin{itemize}
  \item [(i)] hazard rate, denoted by $X\preceq_{c} Y$, if
  $$\frac{h_X(t)}{h_Y(t)}~~\textmd{is increasing in}~t\geq 0.$$

  \item [(ii)] reversed hazard rate, denoted by $X\preceq_{b} Y$, if
    $$\frac{r_X(t)}{r_Y(t)}~~\textmd{is decreasing in}~t\geq 0.$$
\end{itemize}
\end{definition}

Let us look at a technical system that experiences shocks like voltage peaks. A set of independent and identically distributed (i.i.d.) rvs $\{X_i,i\geq1\}$ with a common continuous cdf $F_X$, pdf $f_X$ and survival function $\bar{F}_{X}\equiv 1-F_{X}$ can then be used to simulate the shocks. The loads on the system at different points in time are represented by the shocks. The record statistics (values of the highest stresses reported so far) of this sequence are of interest to us. As we consider the lifetime of devices in our context, we thus suppose that the observations which produce record values are non-negative. Hence, the record values are consequently non-negative. The $i$-th order statistics from the first $n$ observations are labeled with the symbol $X_{i:n}$.

Next, we define the upper record values $X_{U_{n}},$ and upper record timings $\{T_n,n\geq 1\}$, respectively, as follows:
\begin{equation*}\label{eq1}
X_{U_{n}}=X_{T_n:T_n},\ n=0,1,\ldots,
\end{equation*}
where
\begin{equation*}
T_0=1,\ T_{n}=\min\{j:j>T_{n-1},X_j>X_{U_{n}}\},\ n\geq1.
\end{equation*}
It is commonly known that the pdf and sf of $X_{U_n},$ represented by $f_{X_{U_n}}(x)$ and $\bar{F}_{X_{U_n}}(x)$ respectively, are given by
\begin{equation}\label{eq2}
f_{X_{U_n}}(x)=\frac{[-\log (\bar{F}_{X}(x))]^{n-1}}{(n-1)!}f_X(x),~~ x \geq 0,
\end{equation}
and
\begin{align}\label{eq3}
\bar{F}_{X_{U_n}}(x)&=\int_{-\log (\bar{F}_X(x))}^{+\infty} \frac{u^{n-1}}{(n-1)!}e^{-u}du\notag\\
&=\bar{F}_X(x) \, \sum_{k=0}^{n-1} \frac{[-\log (\bar{F}_X(x))]^k}{k!},~~x \geq 0,
\end{align}
where the last identity is derived by using the expansion of incomplete gamma function (see e.g. \cite{Arnold2008}).
In contrast to the upper record values are the lower record values. The $n$th lower record time $L(n), n=1,2,\ldots$ with $L(1)=1$ is stated as
\begin{equation*}
L(1)=1,\ L(n+1)=\min\{j:j>L(n),X_{1:L(n)}>X_{1:j}\},\ n=1,2,\ldots.
\end{equation*}
and the $n$-th lower record is enumerated as $X_{L_n}=X_{1:L(n)}, n=1,2,\ldots.$ The pdf of $X_{L_n}$ can be acquired as
\begin{equation}\label{eq6}
f_{X_{L_n}}(x)=\frac{[-\ln (F_{X}(x))]^{n-1}}{(n-1)!}f_X(x),~~ x\geq 0.
\end{equation}
In addition, $X_{L_n}$ has cdf:
\begin{align}\label{eq7}
F_{X_{L_n}}(x)&=\int_{-\log(F_X(x))}^{+\infty} \frac{u^{n-1}}{(n-1)!}e^{-u}du \notag \\
&=F_{X}(x) \, \sum_{k=0}^{n-1} \frac{[-\ln (F_X(x))]^k}{k!};\; x\geq 0.
\end{align}

Three stochastic orders which consider magnitude of random
variables rather than their relative aging behaviours are defined below according to Shaked and Shanthikumar \cite{Shaked2007}.

\begin{definition}
Let us suppose that $X$ and $Y$ represent the lifetime of two devices. It is said that $X$ is smaller than $Y$ in the

\begin{itemize}
  \item [(i)] hazard rate order, denoted by $X\preceq_{hr} Y$, if
  $$h_X(t)\geq h_Y(t)~~\textmd{for all}~t\geq 0.$$

   \item [(ii)] reversed hazard rate order, denoted by $X\preceq_{rh} Y$, if
  $$r_X(t)\leq r_Y(t)~~\textmd{for all}~t>0.$$

  \item [(iii)] usual stochastic order, denoted by $X\preceq _{st} Y$, if
    $$\bar{F}_X(t)\leq \bar{F}_Y(t)~~\textmd{for all}~t\geq 0.$$
\end{itemize}
\end{definition}

The following definition is regarding the aging property of a life unit.\medskip

\begin{definition}
Let $X$ be a lifetime random variable with hazard rate $h_X$ and reversed hazard rate $r_X$. It is said that $X$ has

\begin{itemize}
  \item [(i)] increasing failure rate (denoted as $X\in IFR$), if $h_X(t)$ is increasing in $t\geq 0$.

   \item [(ii)] decreasing reversed hazard rate (denoted as $X\in DRHR$) if
  $r_X(t)$ is decreasing in $t>0$.
\end{itemize}
\end{definition}

The following definition is due to Karlin \cite{Karlin1968} which be used frequently in the sequel.

\begin{definition}
Let $f(x,y)$ be a non-negative function. It is said that $f$ is Totally positive of order 2 ($TP_2$) in $(x,y)\in \mathcal{X} \times \mathcal{Y}$ where
$\mathcal{X}$ and $\mathcal{Y}$ are two arbitrary subsets of $\mathbb{R}=(-\infty,+\infty)$ whenever
\begin{equation}\label{detTP2}
\begin{array}{|cc|}
f(x_1,y_1) & f(x_1,y_2) \\
f(x_2,y_1) & f(x_2,y_2) \\
\end{array}\geq 0,~~\textmd{for all}~x_1\leq x_2 \in \mathcal{X},~ \textmd{and for all}~~y_1\leq y_2 \in \mathcal{Y}.
\end{equation}
If the direction of the inequality given after determinant in (\ref{detTP2}) is reversed then it is said that $f$ is
Reverse regular of order 2 ($RR_2$) in $(x,y)\in \mathcal{X} \times \mathcal{Y}$.
\end{definition}

\section{Relative aging of upper record values}
\label{sec:3}
Suppose that $X$ and $Y$ are two non-negative rvs with absolutely continuous cdfs $F_X$ and $F_Y$, and the associated pdfs
$f_X$ and $f_Y$, respectively. We also denote by $\bar{F}_X$ and $\bar{F}_Y$ the sfs of $X$ and $Y$, respectively. Let us obtain the expression of hazard rate of $X_{U_m}$ and $Y_{U_n}$, as the upper record values of two sequences of rvs $\{X_i; i=1,2,\ldots\}$ and $\{Y_i; i=1,2,\ldots\}$, respectively. From Eq. (\ref{eq2}) and Eq. (\ref{eq3}), the hazard rate of
$X_{U_m}$ at point $t$ of time, is derived as
\begin{align}\label{hrur1}
h_{X_{U_m}}(t)&= \frac{f_{X_{U_m}}(t)}{\bar{F}_{X_{U_m}}(t)}\nonumber \\
&=\displaystyle\frac{(-\ln(\bar{F}_X(t)))^{m-1}}{(m-1)! \sum_{i=0}^{m-1}(-\ln(\bar{F}_X(t)))^{i}/i!}h_{X}(t)\nonumber \\
&=\displaystyle\frac{1}{(m-1)! \sum_{i=0}^{m-1}(-\ln(\bar{F}_X(t)))^{i-m+1}/i!}h_{X}(t).
\end{align}
In a similar manner, the hazard rate of $Y_{U_n}$ at time point $t$, is derived as
\begin{equation}\label{hrur2}
h_{Y_{U_n}}(t)=\displaystyle\frac{1}{(n-1)! \sum_{i=0}^{n-1}(-\ln(\bar{F}_Y(t)))^{i-n+1}/i!}h_{Y}(t).
\end{equation}

Next, we present a result on the preservation property of relative hazard rate order
under distribution of upper records. \medskip

\begin{theorem}\label{thm1}
Let $X\succeq_{st}Y$ and also let $m\geq n \in \mathbb{N}$. Then, $X\preceq_{c}Y$ implies $X_{U_m}\preceq_{c} Y_{U_n}$.
\end{theorem}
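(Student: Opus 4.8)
The plan is to reduce the claim to a single one-variable monotonicity after a change of time scale. First I would read off from \eqref{hrur1}--\eqref{hrur2} that, writing $u(t)=-\ln\bar F_X(t)$, $v(t)=-\ln\bar F_Y(t)$ and $\phi_k(w)=\dfrac{w^{k-1}/(k-1)!}{\sum_{i=0}^{k-1}w^i/i!}$ (the hazard rate of a $\Gamma(k,1)$ law), one has $h_{X_{U_m}}(t)=\phi_m(u(t))\,h_X(t)$ and $h_{Y_{U_n}}(t)=\phi_n(v(t))\,h_Y(t)$, so that
\[
\frac{h_{X_{U_m}}(t)}{h_{Y_{U_n}}(t)}=\frac{\phi_m(u(t))}{\phi_n(v(t))}\cdot\frac{h_X(t)}{h_Y(t)} .
\]
The aim is to prove this ratio is nondecreasing in $t$.

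Next I would reparametrize by $b=v(t)$ (increasing, hence invertible) and set $\theta=u\circ v^{-1}$, so that $u(t)=\theta(b)$, $v(t)=b$ and $\theta'(b)=h_X(t)/h_Y(t)$. The two hypotheses translate transparently: $X\preceq_{c}Y$ means $h_X/h_Y$ is increasing, i.e. $\theta'$ is increasing, so $\theta$ is \emph{convex}; $X\succeq_{st}Y$ means $\bar F_X\ge\bar F_Y$, i.e. $u\le v$, so $\theta(b)\le b$; and $\theta(0)=0$ since the records are nonnegative. In these variables the ratio becomes $Q(b)=\phi_m(\theta(b))\,\theta'(b)/\phi_n(b)$, and logarithmic differentiation gives
\[
\frac{Q'(b)}{Q(b)}=\psi_m(\theta(b))\,\theta'(b)+\frac{\theta''(b)}{\theta'(b)}-\psi_n(b),\qquad \psi_k:=(\ln\phi_k)' .
\]

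To finish I would show the right-hand side is nonnegative. The middle term is $\ge0$ by convexity of $\theta$, so it suffices to prove $\psi_m(\theta(b))\,\theta'(b)\ge\psi_n(b)$. Using the identity $\psi_k(w)=\frac{k-1}{w}-1+\phi_k(w)$, I would establish two facts about $\psi_k$: (a) $\psi_k(w)$ is nondecreasing in the index $k$; and (b) $w\mapsto w\,\psi_k(w)$ is nonincreasing, together with $\psi_k\ge0$ (the IFR property of $\Gamma(k,1)$). Granting these: from $m\ge n$ and (a), $\psi_m(\theta)\ge\psi_n(\theta)\ge0$; from $\theta$ convex with $\theta(0)=0$, $\theta'(b)\ge\theta(b)/b$; and from (b) applied at $\theta(b)\le b$, $\psi_n(\theta)\,\theta/b\ge\psi_n(b)$. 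Chaining, $\psi_m(\theta)\,\theta'\ge\psi_n(\theta)\,\theta'\ge\psi_n(\theta)\,\theta/b\ge\psi_n(b)$, as required.

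I expect the main obstacle to be exactly facts (a) and (b): each reduces to positivity of a polynomial in $w$ built from the partial exponential sums $S_k(w)=\sum_{i=0}^{k-1}w^i/i!$ (for example (b) is equivalent to $(w\phi_k)'\le1$, i.e. $S_k^2-k\,p\,S_k+w\,p\,S_{k-1}\ge0$ with $p=w^{k-1}/(k-1)!$, while (a) is equivalent to $w(\phi_k-\phi_{k+1})\le1$), which I would verify by exhibiting nonnegative coefficients or by induction on $k$. The reparametrization is the decisive move, since it converts the awkward two-argument expression $\phi_m(u)/\phi_n(v)$ into a single convex function $\theta$ lying below the diagonal, letting the orders $\preceq_{c}$ and $\succeq_{st}$ act in precisely the right slots; an equivalent but more modular route is to write $X_{U_m}\preceq_{c}X_{U_n}\preceq_{c}Y_{U_n}$ and invoke transitivity of $\preceq_{c}$, where the first step rests on (a) and the second on (b).
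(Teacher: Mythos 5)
Your proposal is correct, but it takes a genuinely different route from the paper. The paper proves the theorem through the transitivity decomposition $X_{U_m}\preceq_{c}X_{U_n}\preceq_{c}Y_{U_n}$ (which you mention only as an aside): the first link is obtained by showing the ratio $K(u)$ of normalizing sums is increasing via Karlin's $TP_2$ composition theorem, and the second by writing the ratio of correction factors as an expectation $E[\Phi^{\ast}(I^{\ast}(t),t)]$ over a random index and invoking Lemma 2.1 of Khaledi et al.\ together with Lemma 2.2(i) of Misra and van der Meulen --- soft, derivative-free arguments resting on literature lemmas. Your reparametrization $\theta=u\circ v^{-1}$ instead converts the two hypotheses into geometry ($\theta$ convex, $\theta(b)\le b$, $\theta(0)=0$) and reduces the whole theorem to two explicit inequalities about the Erlang hazard rates $\phi_k$: (a) $\psi_k$ nondecreasing in $k$, and (b) $w\psi_k(w)$ nonincreasing in $w$. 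Both facts are true, so there is no gap of substance, only unfinished verification: (a) is exactly the content of the paper's first link (their $TP_2$ argument proves it, or you can do the coefficient count), while (b) is a strict strengthening of the paper's Lemma 3.1, which in your variable $w=-\ln u$ asserts only that $\psi_k(w)$ itself is nonnegative and nonincreasing (and which the paper needs only later, for Theorem 3.2). Your reduction of (b) to $S_k^2-kpS_k+wpS_{k-1}\ge 0$ does go through by nonnegative coefficients: the coefficient of $w^N$ gives $\sum_{i=N-k+1}^{k-1}\binom{N}{i}\ge(2k-1-N)\binom{N}{k-1}$, which holds because the summation range is symmetric about $N/2$, so every term dominates the extreme term $\binom{N}{k-1}$. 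Two small repairs: since an increasing hazard ratio need not be twice differentiable, avoid $\theta''$ entirely --- your chain $\psi_m(\theta)\theta'\ge\psi_n(b)$ already shows $\phi_m(\theta(b))/\phi_n(b)$ is nondecreasing, so $Q$ is a product of the two nonnegative nondecreasing factors $\phi_m(\theta)/\phi_n$ and $\theta'$ and the convexity term can simply be discarded; also rename your $\psi_k$, which clashes with the paper's $\psi_n(u)=u\xi_n'(u)/\xi_n(u)$ (the two agree only after substituting $u=e^{-w}$). As for what each approach buys: the paper's is shorter given the cited lemmas and needs no smoothness; yours is self-contained, makes transparent how $\preceq_{c}$ (convexity) and $\succeq_{st}$ (sub-diagonality) enter, and yields quantitative facts about Gamma hazards --- (b) in particular --- that would also serve the paper's Theorem 3.2.
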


\begin{proof}
We first prove that $X_{U_m}\preceq_{c} X_{U_n}$ for every $n\in \mathbb{N}$ and for all $m=n,n+1,\ldots$. Then, we
establish that $X_{U_n}\preceq_{c} Y_{U_n}$. From transitivity property of the relative hazard rate order, one concludes
that $X_{U_m}\preceq_{c} Y_{U_n}$. For $m\geq n \in \mathbb{N}$, let us denote
$$K(u):=\frac{\sum_{i=0}^{n-1} u^{i-n+1}/i!}{\sum_{i=0}^{m-1} u^{i-m+1}/i!}.$$
By using (\ref{hrur1}), one can then write
$$\frac{h_{X_{U_m}}(t)}{h_{X_{U_n}}(t)}=\frac{(n-1)!}{(m-1)!}K(-\ln(\bar{F}_X(t))),~~t\geq 0.$$
It suffices to show that $K(u)$ increases in $u\geq 0$. One can get
\begin{align*}
K(u)&=\frac{\sum_{i=0}^{n-1} u^{i+m-n}/i!}{\sum_{i=0}^{m-1} u^{i}/i!}\\
&=\frac{\sum_{i=m-n}^{m-1} u^{i}/(i-(m-n))!}{\sum_{i=0}^{m-1} u^{i}/i!}.
\end{align*}
Note that $K(u)$ in increasing in $u\geq 0$, if and only if,
$$Q^{\ast}(j,u):=\sum_{i=0}^{m-1} Q(j,i)W(i,u)~~\textmd{is}~TP_2~\textmd{in}~(j,u)\in \{1,2\}\times [0,+\infty),$$
where $Q(j,i):=I[i\geq 0]/i!$ for $j=1$ and $Q(j,i):=I[i\geq m-n]/(i-(m-n))!$, for $j=2;$ and also $W(i,u):=u^i$. It is plain
to see that $Q(j,i)$ is $TP_2$ in $(j,i)\in \{1,2\}\times \{0,1,\ldots,m-1\}$ and $W(i,u)$ is also $TP_2$ in $(i,u)\in \{0,1,\ldots,m-1\}\times [0,+\infty)$. Thus, by using general composition theorem of Karlin \cite{Karlin1968}, $Q^{\ast}(j,u)$ is $TP_2$ in $(j,u)\in \{1,2\} \times [0,+\infty).$ Hence, we established that $X_{U_m}\preceq_{c} X_{U_n}$ for every $m>n\in \mathbb{N}$. \medskip

Now, we prove that $X_{U_n}\preceq_{c} Y_{U_n}$, for all $n\in \mathbb{N}.$ From Eq. (\ref{hrur1}) and Eq. (\ref{hrur2}), one has the following
$$\frac{h_{X_{U_n}}(t)}{h_{Y_{U_n}}(t)}=\frac{h_{X}(t)}{h_{Y}(t)}\cdot\frac{\sum_{i=0}^{n-1} (-\ln(\bar{F}_{Y}(t)))^{i-n+1} /i!}
{\sum_{i=0}^{n-1} (-\ln(\bar{F}_{X}(t)))^{i-n+1} /i!}.$$
Since $X \preceq_{c} Y$, thus $h_X(t)/h_Y(t)$ is increasing in $t\geq 0$. Thus, it is sufficient to prove that
$$\frac{\sum_{i=0}^{n-1} (-\ln(\bar{F}_{Y}(t)))^{i-n+1} /i!}
{\sum_{i=0}^{n-1} (-\ln(\bar{F}_{X}(t)))^{i-n+1} /i!}~~\textmd{is increasing in} ~~t\geq 0.$$
Now, we can write
\begin{align*}
\frac{\sum_{i=0}^{n-1} (-\ln(\bar{F}_{Y}(t)))^{i-n+1} /i!}{\sum_{i=0}^{n-1} (-\ln(\bar{F}_{X}(t)))^{i-n+1} /i!}&=
\sum_{i=0}^{n-1} \left( \frac{-\ln(\bar{F}_{Y}(t))}{-\ln(\bar{F}_{X}(t))} \right)^{i-n+1}\cdot \frac{(-\ln(\bar{F}_{X}(t)))^{i-n+1} /i!}
{\sum_{i=0}^{n-1} (-\ln(\bar{F}_{X}(t)))^{i-n+1} /i!}\\
&=E[\Phi^{\ast}(I^{\ast}(t),t)],
\end{align*}
where $\Phi^{\ast}(i,t):=\left( \frac{-\ln(\bar{F}_{Y}(t))}{-\ln(\bar{F}_{X}(t))} \right)^{i-n+1}$ and $I^{\ast}(t)$ is a discrete rv
with pmf
$$f^{\ast}(i|t)=\frac{(-\ln(\bar{F}_{X}(t)))^{i-n+1} /i!}
{\sum_{i=0}^{n-1} (-\ln(\bar{F}_{X}(t)))^{i-n+1} /i!}, i=0,1,\ldots,n-1.$$
Since $X\preceq_{c} Y,$ thus $\frac{h_Y(t)}{h_X(t)}$ is decreasing in $t\geq 0.$ Hence,
from Lemma 2.1 of Khaledi et al. \cite{Khaledi2009}, $(-\ln(\bar{F}_{Y}(t)))/(-\ln(\bar{F}_{X}(t)))$ is decreasing in
$t\geq 0$. Therefore, since for every $i=0,1,\ldots, n-1$, $i-n+1<0$, thus $\Phi^{\ast}(i,t)$ is increasing in $t\geq 0$.
On the other hand, from assumption we have $X \succeq_{st} Y$, and thus, for all $t\geq 0$, it holds that $-\ln(\bar{F}_{Y}(t))\geq -\ln(\bar{F}_{X}(t)).$
As a result, $\Phi^{\ast}(i,t)$ is an increasing function in $i=0,1,\ldots,n-1$, for every $t\geq 0.$ Further, it can be shown readily that $f^{\ast}(i|t)$ is $TP_2$ in $(i,t) \in \{1,2\} \times [0,+\infty)$, which further implies that $I^{\ast}(t_1) \preceq_{st} I^{\ast}(t_2),$
for all $t_1\leq t_2$. Now, by using Lemma 2.2(i) of Misra and van der Meulen \cite{Misra2003}, $E[\Phi^{\ast}(I^{\ast}(t),t)]$ is increasing in $t\geq 0$. Now, the result is proved.
\end{proof}

Stochastic comparison of convolutions of exponential random variables has been an important subject of research
in literature (see, e.g., Bon and P\~{a}lt\~{a}nea \cite{Bon1999} and Kochar and Ma \cite{Kochar1999}). However, relative
ordering of convolutions of exponential random variables has not been investigated in literature thus far. In the next example, we make the
relative hazard rate ordering of convolutions of i.i.d. exponential random variables according to the relative hazard rate order. Note that a
non-negative rv $T$ is said to have Erlang distribution with parameters $(n,\lambda)$ whenever it has pdf
$f_T(t)=\frac{\lambda(\lambda t)^{n-1} e^{-\lambda t}}{(n-1)!},$ for all $t\geq 0$, where $n=1,2,\ldots$ and $\lambda>0$. \medskip

\begin{example}\label{example1}
Let us suppose that $X_1,X_2,\ldots$ and $Y_1,Y_2,\ldots$ are two i.i.d. sequences of exponential random variables with means
$1/\lambda_1$ and $1/\lambda_2$, respectively, such that $\lambda_1 <\lambda_2$. From Eq. (\ref{eq2}), $X_{U_m}$ and $Y_{U_n}$ have pdfs
$$f_{X_{U_m}}(x)=\frac{\lambda_1(\lambda_1 x)^{m-1}e^{-\lambda_1 x}}{(m-1)!}~~\textmd{and}~~
f_{Y_{U_n}}(y)=\frac{\lambda_2(\lambda_2 y)^{n-1}e^{-\lambda_2 y}}{(n-1)!},$$
which are the pdfs of two Erlang distributions with parameters $(m,\lambda_1)$ and $(n,\lambda_2),$ respectively. On the other hand, it is a well-known fact that partial sums of i.i.d. exponential random variables $X_1,\ldots,X_m$ and $Y_1,\ldots,Y_n$ follow Erlang distribution with parameters $(m,\lambda_1)$ and $(n,\lambda_2)$, respectively. Therefore, $X_{U_m}$ and $Y_{U_n}$ are equal in distribution with $\sum_{i=1}^m X_i$ and $\sum_{i=1}^n Y_i$, respectively. From Eq. (\ref{hrur1}) and Eq. (\ref{hrur2}), one gets
$$h_{\sum_{i=1}^m X_i}(t)=\frac{\lambda_1}{(m-1)! \sum_{i=0}^{m-1} (\lambda_1 t)^{i-m+1}/i!},~~
h_{\sum_{i=1}^n Y_i}(t)=\frac{\lambda_2}{(n-1)! \sum_{i=0}^{n-1} (\lambda_2 t)^{i-n+1}/i!}.$$
Since $\lambda_1 <\lambda_2$, thus $X_1 \succeq_{st} Y_1$. Thus, for every
$m \geq n \in \mathbb{N}$, by using Theorem \ref{thm1}, one concludes that $\sum_{i=1}^m X_i \preceq_{c} \sum_{i=1}^n Y_i.$ In Figure \ref{fig1},
we have plotted the hazard rate ratio of convolutions of exponential random variables, which is an increasing function.
\end{example}

\begin{figure}
\begin{center}
\includegraphics[trim= 1cm 1cm 1cm 1cm,height=6cm, angle=0]{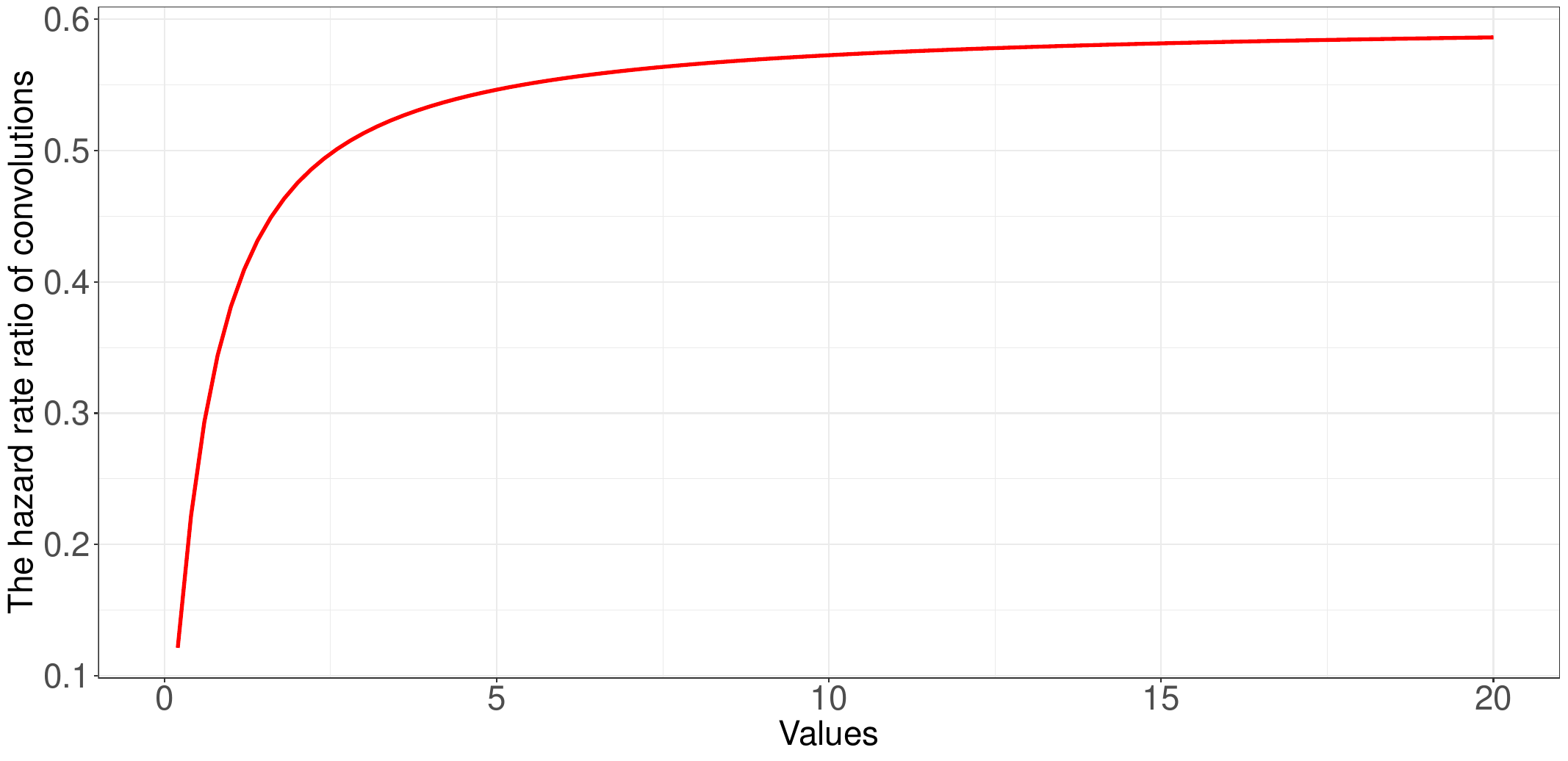}
\end{center}
\caption{The plot of the function $\frac{h_{\sum_{i=1}^m X_i}(t)}{h_{\sum_{i=1}^n Y_i}(t)}$ in Example \ref{example1} for $m=3, n=2,\lambda_1=3$ and $\lambda_2=5$.    \label{fig1}}
\end{figure}

In Theorem \ref{thm1} there is an order condition that $X \succeq_{st} Y$, However, this is a strong condition that may not be satisfied. In the residual
part of this section, we want to relax the condition $X \succeq_{st} Y$ to obtain the preservation property of the relative hazard rate order under upper records. The following lemma is useful to prove such result.

\begin{lemma}\label{lemm1}
Let $\xi_n(u)=(n-1)! \sum_{i=0}^{n-1} \frac{(-\ln(u))^{i-n+1}}{i!},$ for $u \in (0,1)$. Then, $\frac{u\xi'_n(u)}{\xi_n(u)}$ is a non-negative increasing
function of $u \in (0,1)$ for every $n=1,2,\ldots$.
\end{lemma}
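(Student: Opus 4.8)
The plan is to change variables to $v = -\ln(u)$, which carries $u \in (0,1)$ onto $v \in (0,+\infty)$ and is strictly decreasing, so that \emph{increasing in $u$} is the same as \emph{decreasing in $v$}. Setting $g_n(v) := (n-1)!\sum_{i=0}^{n-1} v^{i-n+1}/i!$, we have $\xi_n(u) = g_n(-\ln u)$, and since $u\,\frac{d}{du}(-\ln u) = -1$ the chain rule gives
$$\frac{u\,\xi_n'(u)}{\xi_n(u)} = -\frac{g_n'(v)}{g_n(v)} = -\frac{d}{dv}\ln g_n(v).$$
Thus the whole problem is recast as two statements about $g_n$ on $(0,+\infty)$: that $-g_n'/g_n \geq 0$, and that it is \emph{decreasing} in $v$.

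First I would dispatch non-negativity. Reindexing by $j = n-1-i$ rewrites $g_n(v) = \sum_{j=0}^{n-1} \frac{(n-1)!}{(n-1-j)!}\, v^{-j}$, a sum of the monomials $v^{-j}$ with strictly positive coefficients; since every exponent $-j$ is $\leq 0$, each term is non-increasing, whence $g_n' \leq 0$ and $-g_n'/g_n \geq 0$ on $(0,+\infty)$. Equivalently, in the original variable the $i=n-1$ term drops out of the numerator (its factor $i-n+1$ vanishes) and the surviving coefficients $n-1-i$ are non-negative.

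The monotonicity is the heart of the matter, and I would reduce it to \emph{log-convexity} of $g_n$. Because $-g_n'/g_n = -(\ln g_n)'$, this quantity is decreasing in $v$ exactly when $\ln g_n$ is convex, i.e. when $g_n$ is log-convex. Now each summand $c_j v^{-j}$ with $c_j > 0$ is log-convex on $(0,+\infty)$, since $\ln(c_j v^{-j}) = \ln c_j - j\ln v$ has second derivative $j/v^2 \geq 0$. The key step is then the standard fact that a sum of log-convex functions is log-convex: writing $f_j := c_j v^{-j}$ and $g_n g_n'' - (g_n')^2 = \sum_{j,k}\bigl(f_j f_k'' - f_j' f_k'\bigr)$, the diagonal terms are non-negative by log-convexity of each $f_j$, while the off-diagonal pairs are controlled by the arithmetic--geometric mean inequality, yielding $g_n g_n'' \geq (g_n')^2$. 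Hence $(\ln g_n)'' \geq 0$, so $-(\ln g_n)'$ is decreasing in $v$; translating back through $v=-\ln u$ shows that $u\xi_n'(u)/\xi_n(u)$ is non-negative and increasing in $u$, as required.

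I expect the only genuine obstacle to be the log-convexity step; everything else is bookkeeping. If one prefers not to invoke the preservation of log-convexity under addition, the same conclusion follows by verifying $g_n g_n'' \geq (g_n')^2$ directly from the Cauchy--Schwarz / AM--GM estimate sketched above.
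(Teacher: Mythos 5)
Your proof is correct, and it takes a genuinely different route from the paper. The paper never changes variables; instead it writes
$$\frac{u\xi_n'(u)}{\xi_n(u)}=\sum_{i=0}^{n-1}\Psi(i,u)\,f(i\mid u)=E[\Psi(I(u),u)],\qquad \Psi(i,u)=\frac{n-i-1}{-\ln(u)},$$
where $I(u)$ is a discrete random variable with pmf proportional to $(-\ln u)^{i-n+1}/i!$ on $\{0,\ldots,n-1\}$; it then checks that $\Psi$ is decreasing in $i$ and increasing in $u$, that the pmf is $RR_2$ (so $I(u)$ is stochastically decreasing in $u$), and invokes Lemma 2.2(i) of Misra and van der Meulen to conclude that the expectation is increasing. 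Your argument replaces all of this machinery with the substitution $v=-\ln u$ and the observation that the claim is exactly log-convexity of $g_n(v)=\sum_{j=0}^{n-1}\frac{(n-1)!}{(n-1-j)!}v^{-j}$, which follows from closure of log-convexity under addition. What your approach buys is self-containedness and transparency: no external stochastic-order lemma is needed, and in this specific case even the AM--GM/Cauchy--Schwarz step can be replaced by the explicit computation $f_jf_k''+f_kf_j''-2f_j'f_k'=c_jc_k\,v^{-j-k-2}\bigl((j-k)^2+j+k\bigr)\geq 0$, making the whole proof a short calculus exercise. What the paper's approach buys is uniformity: the same $TP_2$/$RR_2$-plus-expectation template is the engine behind Theorems 3.1 and 4.1 as well, so its proof of the lemma recycles tools the reader already has in hand, and the probabilistic representation $E[\Psi(I(u),u)]$ is the form in which the quantity actually gets used elsewhere in the paper.
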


\begin{proof}
Let us suppose that $I(u)$ is a discrete random variable with the following probability mass function
$$f(i\mid u):=\frac{(-\ln(u))^{i-n+1}/i!}{\sum_{i=0}^{n-1} (-\ln(u))^{i-n+1} /i!},~i=0,1,\ldots,n-1.$$
Then, one has the following
\begin{align*}
\frac{u \xi'_{n}(u)}{\xi_n(u)}&=\frac{\sum_{i=0}^{n-1} (n-i-1)(-\ln(u))^{i-n}/i!}{\sum_{i=0}^{n-1}(-\ln(u))^{i-n+1}/i!}\\
&=\sum_{i=0}^{n-1} \left(\frac{n-i-1}{-\ln(u)}\right)\cdot\frac{(-\ln(u))^{i-n+1}/i!}{\sum_{i=0}^{n-1} (-\ln(u))^{i-n+1} /i!}\\
&=\sum_{i=0}^{n-1} \Psi(i,u)\cdot f(i\mid u)\\
&=E[\Psi(I(u),u)],
\end{align*}
where $\Psi(i,u):=\frac{n-i-1}{-\ln(u)}$ is a decreasing function of $i=1,2,\ldots,n-1$ and it is an increasing function of $u\in (0,1)$. Note that $f(i\mid u)$ is an $RR_2$ function in $(i,u)\in \{1,2\}\times (0,1)$. Hence, $I(u_1)\geq_{st} I(u_2),$ for every $u_1 \leq u_2 \in (0,1)$, i.e., $I(u)$ is stochastically decreasing in $u\in (0,1)$. Thus, by using Lemma 2.2(i) of Misra and van der Meulen \cite{Misra2003}, $E[\Psi(I(u),u)]$ is an increasing function of $u$. This completes the proof of lemma.
\end{proof}

The limiting behaviour of hazard rates ratio is an important problem in reliability analysis (see, e.g., Navarro and Sarabia \cite{Navarro2024}). Let us define
$$c_0=\lim_{t\rightarrow 0^{+}} \frac{h_{X}(t)}{h_Y(t)},~~\textmd{and}~~c_1=\lim_{t\rightarrow +\infty} \frac{h_{X}(t)}{h_Y(t)}.$$
In what follows, we assume that $c_0$ and $c_1$ are positive and finite. The following theorem is the main result of this section.\medskip

\begin{theorem}\label{thm2}
Let $\psi_{n}(u)=\frac{u\xi'_{n}(u)}{\xi_n(u)}$  where $\xi_n(u)=(n-1)! \sum_{i=0}^{n-1} \frac{(-\ln(u))^{i-n+1}}{i!},$ for $u \in (0,1)$. If  $\sup_{0<u<1} \frac{\psi_{n}(u)}{\psi_{m}(u^{c_1})} \leq c_0$ then,
$X\preceq_{c}Y$ implies $X_{U_m} \preceq_{c} Y_{U_n}.$
\end{theorem}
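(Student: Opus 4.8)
The plan is to show that the logarithmic derivative of the hazard rate ratio of the two records is non-negative. Using $h_{X_{U_m}}(t)=h_X(t)/\xi_m(\bar{F}_X(t))$ and $h_{Y_{U_n}}(t)=h_Y(t)/\xi_n(\bar{F}_Y(t))$, which follow directly from (\ref{hrur1}) and the definition of $\xi_k$, I would write
$$\frac{h_{X_{U_m}}(t)}{h_{Y_{U_n}}(t)}=\frac{h_X(t)}{h_Y(t)}\cdot\frac{\xi_n(\bar{F}_Y(t))}{\xi_m(\bar{F}_X(t))}.$$
Taking logarithms and differentiating, and using $f_X=h_X\bar{F}_X$, $f_Y=h_Y\bar{F}_Y$ together with $\psi_k(u)=u\xi_k'(u)/\xi_k(u)$, the chain rule collapses the two $\xi$-terms cleanly, giving
$$\frac{d}{dt}\ln\frac{h_{X_{U_m}}(t)}{h_{Y_{U_n}}(t)}=\frac{d}{dt}\ln\frac{h_X(t)}{h_Y(t)}+\Big(h_X(t)\,\psi_m(\bar{F}_X(t))-h_Y(t)\,\psi_n(\bar{F}_Y(t))\Big).$$
Since $X\preceq_{c}Y$, the first term is non-negative, so the whole task reduces to proving that the bracketed residual is non-negative.

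The residual inequality is equivalent to $\frac{h_X(t)}{h_Y(t)}\geq\frac{\psi_n(\bar{F}_Y(t))}{\psi_m(\bar{F}_X(t))}$. Because $h_X/h_Y$ is increasing with infimum $c_0=\lim_{t\to0^{+}}h_X/h_Y$, I have $h_X(t)/h_Y(t)\geq c_0$ for all $t$, so it suffices to establish $\frac{\psi_n(\bar{F}_Y(t))}{\psi_m(\bar{F}_X(t))}\leq c_0$. This is exactly the place where the hypothesis $\sup_{0<u<1}\psi_n(u)/\psi_m(u^{c_1})\leq c_0$ is designed to enter, provided I can replace $\bar{F}_X(t)$ in the denominator by $\bar{F}_Y(t)^{c_1}$ in a favourable direction.

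The key link is therefore the comparison $\bar{F}_X(t)\geq\bar{F}_Y(t)^{c_1}$. To obtain it, I would use that $\frac{-\ln\bar{F}_X(t)}{-\ln\bar{F}_Y(t)}=\frac{\int_0^t h_X}{\int_0^t h_Y}$ is increasing in $t$ (the monotonicity of a ratio of integrals with increasing integrand ratio, as in Lemma 2.1 of Khaledi et al. \cite{Khaledi2009}) and that, by L'Hopital, its limit as $t\to\infty$ equals $c_1$; hence $\frac{-\ln\bar{F}_X(t)}{-\ln\bar{F}_Y(t)}\leq c_1$, which rearranges to $-\ln\bar{F}_X(t)\leq -\ln\bar{F}_Y(t)^{c_1}$ and thus $\bar{F}_X(t)\geq\bar{F}_Y(t)^{c_1}$. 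Since Lemma \ref{lemm1} guarantees that $\psi_m$ is non-negative and increasing, this yields $\psi_m(\bar{F}_X(t))\geq\psi_m(\bar{F}_Y(t)^{c_1})$, whence $\frac{\psi_n(\bar{F}_Y(t))}{\psi_m(\bar{F}_X(t))}\leq\frac{\psi_n(\bar{F}_Y(t))}{\psi_m(\bar{F}_Y(t)^{c_1})}\leq\sup_{0<u<1}\frac{\psi_n(u)}{\psi_m(u^{c_1})}\leq c_0$. Combining this with $h_X/h_Y\geq c_0$ closes the residual inequality and hence the proof. I expect the main obstacle to be precisely this comparison step: controlling two distinct records through a single scalar bound forces a quantitative relation between $\bar{F}_X$ and $\bar{F}_Y$, and the correct device is the uniform upper bound $c_1$ on the increasing log-survival ratio, fed through the monotonicity of $\psi_m$ from Lemma \ref{lemm1}. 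A minor case to dispatch separately is $m=1$, where $\psi_m\equiv 0$ and the hypothesis forces the statement to be trivial or vacuous.
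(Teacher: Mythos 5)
Your proposal is correct and follows essentially the same route as the paper's proof: the same factorization of the record hazard ratio, the same reduction to the residual term $h_X(t)\psi_m(\bar{F}_X(t))-h_Y(t)\psi_n(\bar{F}_Y(t))$, and the same key bounds $h_X(t)/h_Y(t)\geq c_0$ and $\bar{F}_X(t)\geq\bar{F}_Y^{c_1}(t)$ fed through the non-negativity and monotonicity of $\psi_m$ from Lemma \ref{lemm1}. The only minor differences are that the paper obtains $\bar{F}_X(t)\geq\bar{F}_Y^{c_1}(t)$ directly by integrating the pointwise bound $h_X(t)\leq c_1 h_Y(t)$ rather than via your increasing cumulative-hazard ratio plus L'H\^{o}pital argument, and that your explicit dispatch of the degenerate case $m=1$ (where $\psi_m\equiv 0$) is an edge case the paper leaves implicit.
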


\begin{proof}
From Eq. (\ref{hrur1}) and Eq. (\ref{hrur2}), for all $t\geq0,$ one has the following
$$h_{X_{U_m}}(t)=\frac{h_X(t)}{\xi_m(\bar{F}_X(t))},~~h_{Y_{U_n}}(t)=\frac{h_Y(t)}{\xi_n(\bar{F}_Y(t))}.$$
Thus, it is evident that
$$\frac{h_{X_{U_m}}(t)}{h_{Y_{U_n}}(t)}=\frac{h_X(t)}{h_{Y}(t)}\cdot\frac{\xi_n(\bar{F}_Y(t))}{\xi_m(\bar{F}_X(t))}.$$
Since from assumption $X\preceq_{c}Y,$ thus it is enough to
show that $\frac{\xi_n(\bar{F}_Y(t))}{\xi_m(\bar{F}_X(t))}$ is increasing in $t\geq 0$, or equivalently demonstrate that
\begin{equation}\label{eq8}
\frac{\partial}{\partial t}\ln\left( \frac{\xi_n(\bar{F}_Y(t))}{\xi_m(\bar{F}_X(t))}\right) \geq 0,~~\textmd{for every }~t\geq0.
\end{equation}
We have

\begin{align*}
\frac{\partial}{\partial t}\ln\left( \frac{\xi_n(\bar{F}_Y(t))}{\xi_m(\bar{F}_X(t))}\right)&=-\frac{f_Y(t)\xi'_n(\bar{F}_Y(t))}{\xi_n(\bar{F}_Y(t))}
+\frac{f_{X}(t)\xi'_m(\bar{F}_X(t))}{\xi_m(\bar{F}_X(t))}\\
&=-\frac{f_Y(t)}{\bar{F}_Y(t)}\frac{\bar{F}_Y(t)\xi'_n(\bar{F}_Y(t))}{\xi_n(\bar{F}_Y(t))}
+\frac{f_{X}(t)}{\bar{F}_X(t)}\frac{\bar{F}_{X}(t)\xi'_m(\bar{F}_X(t))}{\xi_m(\bar{F}_X(t))}\\
&=h_X(t)\cdot\frac{\bar{F}_{X}(t)\xi'_m(\bar{F}_X(t))}{\xi_m(\bar{F}_X(t))}-h_Y(t)\cdot\frac{\bar{F}_Y(t)\xi'_n(\bar{F}_Y(t))}{\xi_n(\bar{F}_Y(t))}.
\end{align*}

Note that since $\frac{h_X(t)}{h_{Y}(t)}$ is increasing in $t\geq 0$, thus $\frac{h_X(t)}{h_{Y}(t)}\geq c_0$, for all $t\geq 0,$ and  $\frac{h_X(t)}{h_{Y}(t)}\leq c_1$, for all $t\geq 0,$ which further imply that $h_X(t)\geq c_0 h_Y(t),$ for all $t\geq 0$ and
that $\bar{F}_X(t)\geq \bar{F}^{c_1}_{Y}(t),$ for all $t\geq 0,$ respectively. The second implication is due to the recursive formulas
$\bar{F}_X(t)=e^{-\int_0^t h_X(x) dx}$ and $\bar{F}_Y(t)=e^{-\int_0^t h_Y(x) dx}$ so that if $h_X(t) \leq c_1 h_Y(t),$ for all $t\geq 0$,
then one concludes $\bar{F}_X(t)\geq \bar{F}^{c_1}_{Y}(t),$ for all $t\geq 0.$ Now, one has the following

\begin{align*}
\frac{\partial}{\partial t}\ln\left( \frac{\xi_n(\bar{F}_Y(t))}{\xi_m(\bar{F}_X(t))}\right)&=h_X(t)\cdot\frac{\bar{F}_{X}(t)\xi'_m(\bar{F}_X(t))}
{\xi_m(\bar{F}_X(t))}-h_Y(t)\cdot\frac{\bar{F}_Y(t)\xi'_n(\bar{F}_Y(t))}{\xi_n(\bar{F}_Y(t))}\\
&\geq h_{Y}(t)\cdot\left(c_0\frac{\bar{F}_{X}(t)\xi'_m(\bar{F}_X(t))}{\xi_m(\bar{F}_X(t))}-\frac{\bar{F}_Y(t)\xi'_n(\bar{F}_Y(t))}{\xi_n(\bar{F}_Y(t))}\right)\\
&\geq h_{Y}(t)\cdot\left(c_0\frac{\bar{F}^{c_1}_{Y}(t)\xi'_m(\bar{F}^{c_1}_{Y}(t))}{\xi_m(\bar{F}^{c_1}_{Y}(t))}
-\frac{\bar{F}_Y(t)\xi'_n(\bar{F}_Y(t))}{\xi_n(\bar{F}_Y(t))}\right)\\
&=h_{Y}(t)\cdot\left(c_0\psi_m(\bar{F}^{c_1}_{Y}(t))-\psi_n(\bar{F}_Y(t))\right)
\end{align*}

where the last inequality holds true because $\bar{F}_X(t)\geq \bar{F}^{c_1}_{Y}(t),$ for all $t\geq 0$ and, further, since
from Lemma \ref{lemm1}, $\frac{u\xi'_{n}(u)}{\xi_n(u)}$ is increasing in $u\in (0,1).$  Now, one can see that (\ref{eq8}) is
satisfied if $c_0\psi_m(\bar{F}^{c_1}_{Y}(t))\geq \psi_n(\bar{F}_Y(t)),$ for all $t\geq 0$. Since $\psi_m$ is a non-negative function, and
moreover, since $\sup_{t\geq 0} \frac{\psi_{n}(\bar{F}_Y(t))}{\psi_{m}(\bar{F}^{c_1}_{Y}(t))}=\sup_{0<u<1} \frac{\psi_{n}(u)}{\psi_{m}(u^{c_1})}$ thus
the recent inequality holds if, and only if,
$$\sup_{0<u<1} \frac{\psi_{n}(u)}{\psi_{m}(u^{c_1})} \leq c_0.$$
\end{proof}

\begin{remark}\label{rem1}
It is to be mentioned that the result of Theorem \ref{thm2} in situations where $X$ and $Y$ are equal in distribution, could also be achieved by using Theorem \ref{thm1}, since the assumption $X\succeq_{st}Y$ is satisfied when $X$ and $Y$ have a common
distribution. It is remarkable in the context of Theorem \ref{thm2}, that if $X$ and $Y$ are equal in distribution, then $c_0=c_1=1$. Note that
$\psi_n(u)$ is increasing in $n \geq 1$, if and only if for every $m\geq n \in \mathbb{N}$,
$$\psi_m(u):=\frac{u\xi'_{m}(u)}{\xi_m(u)}\geq \frac{u\xi'_{n}(u)}{\xi_n(u)}=\psi_{n}(u),~~\textmd{for all}~u\in (0,1).$$
The above inequality is also equivalent to
$$\frac{\xi'_{m}(u)}{\xi_m(u)}\geq \frac{\xi'_{n}(u)}{\xi_n(u)},~~\textmd{for all}~u\in (0,1),$$
which means that $\frac{\xi_m(u)}{\xi_n(u)}$ is increasing in $u \in (0,1).$ We can write
\begin{align*}
\frac{\xi_m(u)}{\xi_n(u)}&=\frac{(m-1)! \sum_{i=0}^{m-1} (-\ln(u))^{i-m+1} /i!}{(n-1)! \sum_{i=0}^{n-1} (-\ln(u))^{i-n+1} /i!}\\
&=\frac{(m-1)!}{(n-1)!}\cdot \frac{\sum_{i=0}^{m-1} (-\ln(u))^{i} /i!}{\sum_{i=0}^{n-1} (-\ln(u))^{i-n+m} /i!}\\
&=\frac{(m-1)!}{(n-1)!}\cdot \frac{\sum_{i=0}^{m-1} (-\ln(u))^{i} /i!}{\sum_{i=m-n}^{m-1} (-\ln(u))^{i} /(i-(m-n))!},
\end{align*}
in which $\frac{\sum_{i=0}^{m-1} (-\ln(u))^{i} /i!}{\sum_{i=m-n}^{m-1} (-\ln(u))^{i} /(i-(m-n))!}$ is increasing in $u \in (0,1)$, if and only if,
$$q^{\star}(j,u):=\sum_{i=0}^{m-1} q(j,i)w(i,u)~\textmd{is}~ TP_2 ~\textmd{in} ~(j,u)\in \{1,2\} \times (0,1)$$
where $q(j,i):=I[i\geq 0]/i!$ for $j=2$ and
$q(j,i):=I[i\geq m-n]/(i-(m-n))!$ for $j=1$, and $w(i,u):=(-\ln(u))^i$. Since $q(j,i)$ is $RR_2$ in $(j,i)\in \{1,2\} \times \{0,1,\ldots, m-1\}$
and $w(i,u)$ is also $RR_2$ in $(i,u)\in \{1,2\} \times (0,1),$ thus the required result follows from general composition theorem of Karlin
\cite{Karlin1968}. Now, since $c_0=c_1=1$ as discussed before, thus for every $m=n,n+1,\ldots$ where $n\in \mathbb{N}$ one has
\begin{align*}
\sup_{0\leq u\leq 1} \frac{\psi_{n}(u)}{\psi_{m}(u^{c_1})}&=\sup_{0\leq u\leq 1} \frac{\psi_{n}(u)}{\psi_{m}(u)} \\
&\leq 1=c_0,
\end{align*}
where the last inequality holds because $\psi_n(u) \leq \psi_m(u),$ for every $u\in [0,1].$ Therefore, from Theorem \ref{thm2}, we deduced that if $m\geq n \in \mathbb{N},$ then $X_{U_m} \preceq_{c} X_{U_n}$.
\end{remark}

The following example presents an example where Theorem \ref{thm2} is applicable. However, we can not deduce the result using Theorem \ref{thm1}
because $X \nsucceq_{st} Y$. \medskip

\begin{example}\label{exa7}
Let us consider $X$ and $Y$ as two random lifetimes with Lomax distributions having sfs $\bar{F}_X(t)=\frac{1}{(1+t)^3}$ and $\bar{F}_Y(t)=\frac{9}{(3+4t)^2}.$ The hr functions of $X$ and $Y$ are then calculated as $h_X(t)=\frac{3}{1+t}$ and $h_Y(t)=\frac{8}{3+4t}$, which yields $\frac{h_X(t)}{h_Y(t)}=\frac{9+12t}{8+8t}.$ The function $\frac{h_X(t)}{h_Y(t)}$ is increasing in $t\geq 0$.
Thus, according to definition, $X\preceq_{c} Y$. It is seen that $c_0=\lim_{t\rightarrow 0^{+}} \frac{h_X(t)}{h_Y(t)}=\frac{9}{8}$ and $c_1=\lim_{t\rightarrow +\infty} \frac{h_X(t)}{h_Y(t)}=\frac{3}{2}$. It is trivial that, for all $t\geq 0$, $h_X(t) \geq h_Y(t)$ which means $X\preceq_{hr}Y$, and consequently,
 $X\preceq_{st} Y$. Therefore, the condition of Theorem \ref{thm1} does not hold. We want to compare the second upper record from a sequence of i.i.d.
  random lifetimes adopted from $F_X$ to the third upper record from a sequence of i.i.d.
  random lifetimes following $F_Y$. Hence, $n=2$ and $m=3$. We show using the setting of Theorem \ref{thm2} that $X_{U_m}\preceq_{c} Y_{U_n}.$
For $u\in [0,1]$, we observe that
$$\xi_n(u)=\frac{1}{-\ln(u)}+1, ~\textmd{and}~\xi_m(u)=1+\frac{2}{-\ln(u)}+\frac{2}{(-\ln(u))^2},$$
from which one further obtains
$$\xi'_n(u)=\frac{1}{u(-\ln(u))^2},~\textmd{and}~\xi'_m(u)=\frac{2}{u(-\ln(u))^2}+\frac{4}{u(-\ln(u))^3}.$$
After some routine calculation, we derive
$$\psi_n(u)=\frac{u\xi'_n(u)}{\xi_n(u)}=\frac{1}{(-\ln(u))\cdot(-\ln(u)+1)}$$
and, similarly,
$$\psi_m(u)=\frac{u\xi'_m(u)}{\xi_m(u)}=\frac{-2\ln(u)+4}{(-\ln(u))\cdot((-\ln(u))^2-2\ln(u)+2)}.$$
Therefore,
$$\frac{\psi_n(u)}{\psi_m(u^{c_1})}=\frac{\psi_n(u)}{\psi_m(u^{\frac{3}{2}})}=\frac{27(-\ln(u))^3 +36(-\ln(u))^2 +24(-\ln(u))}{(-\ln(u))\cdot(1-\ln(u))\cdot(32-24\ln(u))}.$$
It can be checked using website \emph{https://www.dcode.fr/maximum-function} that $\sup_{0\leq u\leq 1} \frac{\psi_{n}(u)}{\psi_{m}(u^{c_1})}=\frac{9}{8}$. One can also see Figure \ref{fig2}. Hence, the condition of Theorem \ref{thm2} is satisfied and hence the required result follows.
\end{example}
In Example \ref{exa7}, one can further obtain
$$h_{X_{U_m}}(t)=\frac{h_X(t)}{\xi_m(\bar{F}_X(t))}=\frac{27\ln^2(1+t)}{(1+t)\cdot(2+6\ln(1+t)+9\ln^2(1+t))},$$
and
$$h_{Y_{U_n}}(t)=\frac{h_Y(t)}{\xi_n(\bar{F}_Y(t))}=\frac{8(-\ln(9)+2\ln(3+4t))}{(3+4t)\cdot(1-\ln(9)+2\ln(3+4t))}.$$
In Figure \ref{fig3} we plot the graph of $\frac{h_{X_{U_m}}(t)}{h_{Y_{U_n}}(t)}$ to exhibit that it is an increasing function
in $t$.

\begin{figure}
\begin{center}
\includegraphics[trim= 1cm 1cm 1cm 1cm,height=6cm, angle=0]{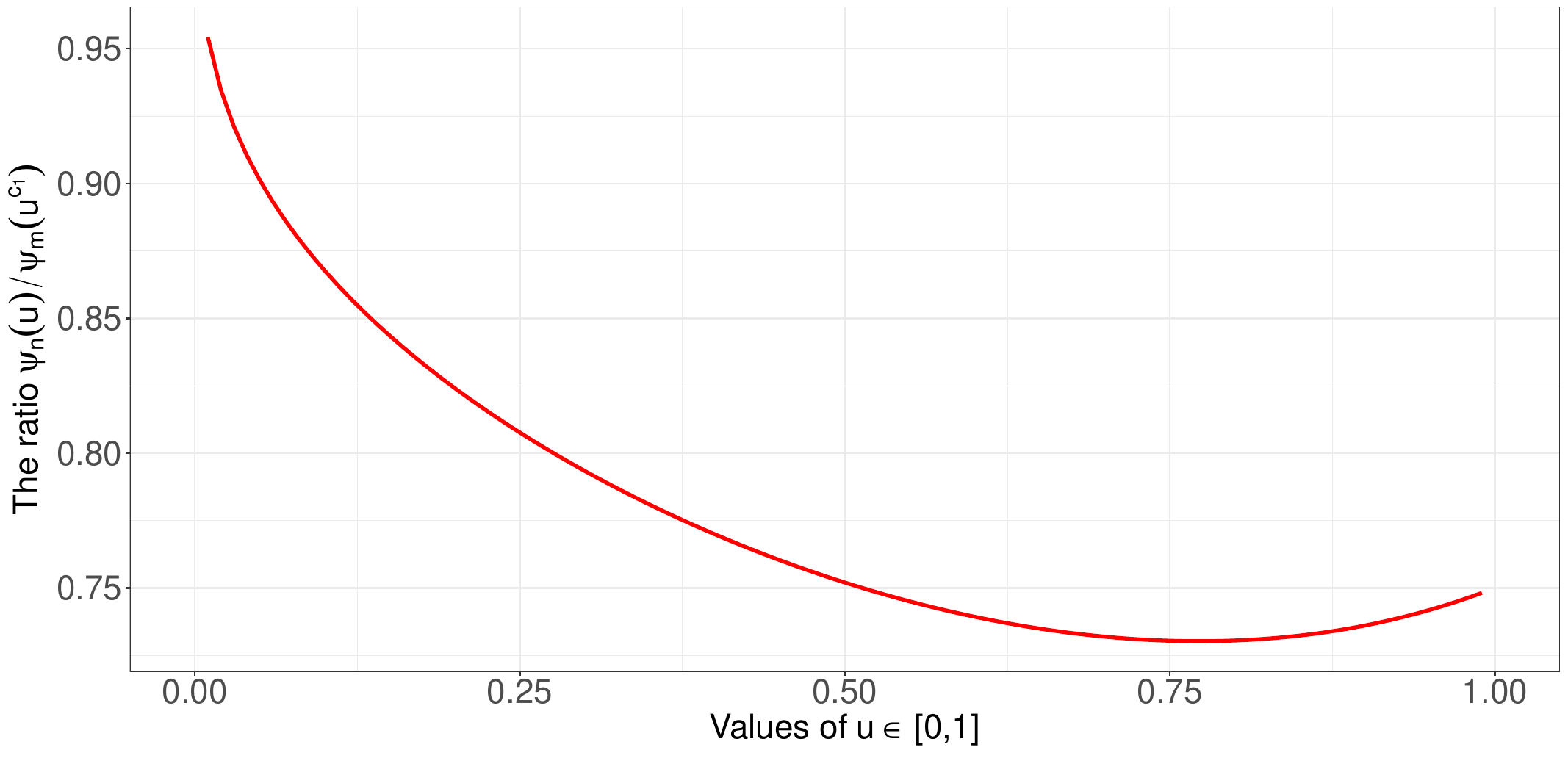}
\end{center}
\caption{The graph of the function $\frac{\psi_n(u)}{\psi_m(u^{c_1})}$ in Example \ref{exa7}.    \label{fig2}}
\end{figure}

\begin{figure}
\begin{center}
\includegraphics[trim= 1cm 1cm 1cm 1cm,height=6cm, angle=0]{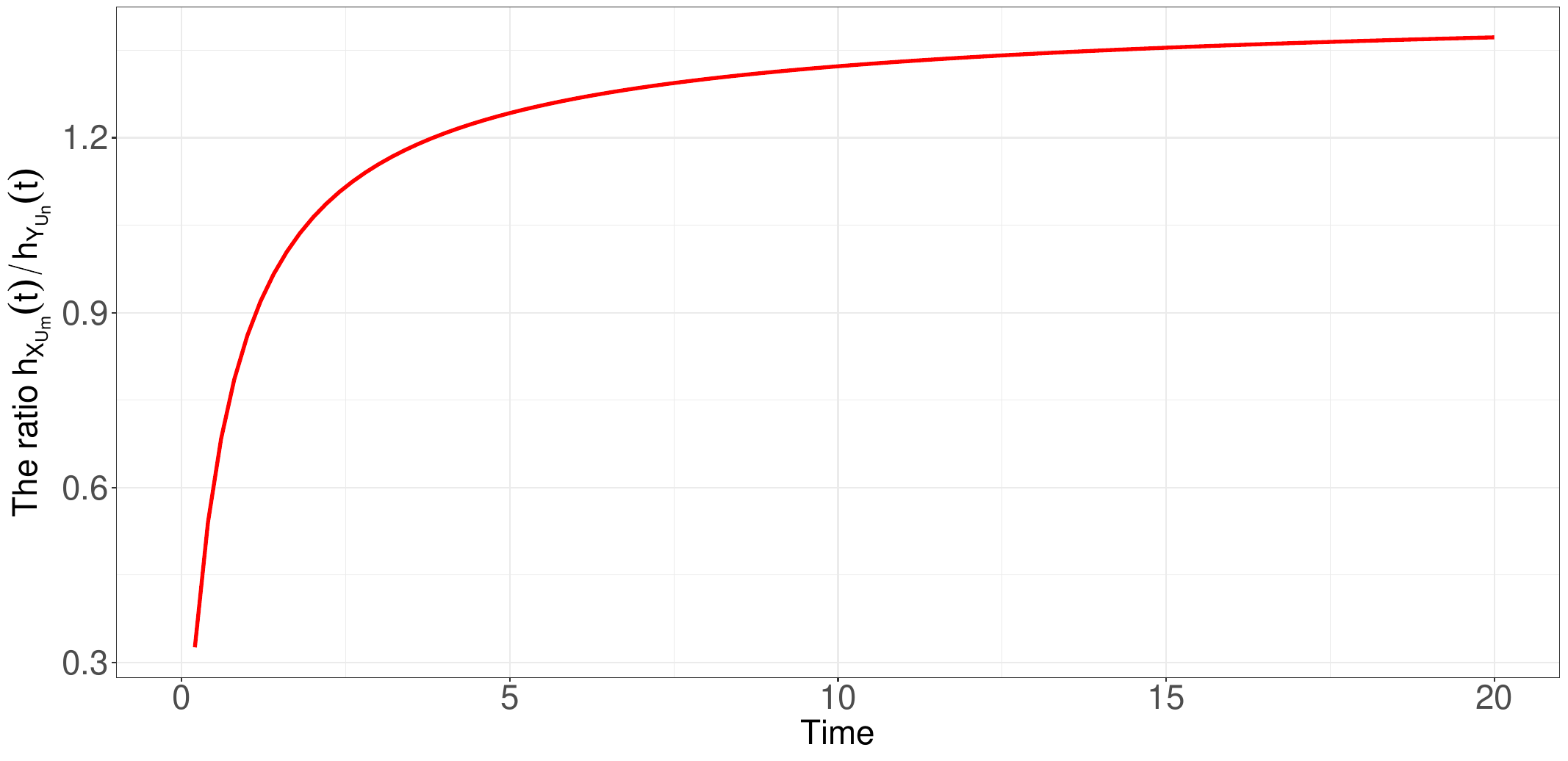}
\end{center}
\caption{The graph of $\frac{h_{X_{U_m}}(t)}{h_{Y_{U_n}}(t)}$ in Example \ref{exa7}.    \label{fig3}}
\end{figure}

\section{Relative comparison of lower record values}
\label{sec:4}
In this section, we consider the lower records $X_{L_m}$ and $Y_{L_n}$ from two sequences of rvs
$\{X_i: i=1,2,\ldots\}$ and $\{Y_i: i=1,2,\ldots\}$, each containing i.i.d.
observations. The rhr of $X_{L_m}$ is obtained as
\begin{align}\label{rhrlwr1}
r_{X_{L_m}}(t)&=\frac{f_{X_{L_m}}(t)}{F_{X_{L_m}}(t)}\nonumber \\
&=\displaystyle\frac{(-\ln(F_X(t)))^{m-1}}{(m-1)! \sum_{i=0}^{m-1}(-\ln(F_X(t)))^{i}/i!}r_{X}(t)\nonumber \\
&=\displaystyle\frac{1}{(m-1)! \sum_{i=0}^{m-1}(-\ln(F_X(t)))^{i-m+1}/i!}r_{X}(t).
\end{align}
Analogously, the rhr of $Y_{L_n}$ at time point $t$, is acquired as
\begin{equation}\label{rhrlwr2}
r_{Y_{L_n}}(t)=\displaystyle\frac{1}{(n-1)! \sum_{i=0}^{n-1}(-\ln(F_Y(t)))^{i-n+1}/i!}r_{Y}(t).
\end{equation}

Next, we present a result on the preservation property of relative reversed hazard rate order
under distribution of lower records. \medskip

\begin{theorem}\label{thm3}
Let $X\preceq_{st}Y$ and $m\geq n \in \mathbb{N}$. Then, $X\preceq_{b}Y$ yields $X_{L_m}\preceq_{b} Y_{L_n}$.
\end{theorem}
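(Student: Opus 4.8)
The plan is to mirror the three-step architecture of the proof of Theorem \ref{thm1}. First I would establish the intra-distribution comparison $X_{L_m} \preceq_b X_{L_n}$ for every $m \geq n$; next the same-index cross-distribution comparison $X_{L_n} \preceq_b Y_{L_n}$; and finally combine the two via the transitivity of the relative reversed hazard rate order $\preceq_b$ (which holds because a product of two decreasing positive ratios is again decreasing), yielding $X_{L_m} \preceq_b Y_{L_n}$.

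For the first step, by Eq. (\ref{rhrlwr1}) the ratio $r_{X_{L_m}}(t)/r_{X_{L_n}}(t)$ equals $\frac{(n-1)!}{(m-1)!}\,K(-\ln(F_X(t)))$, where $K$ is \emph{exactly} the function $K(u)=\frac{\sum_{i=0}^{n-1} u^{i-n+1}/i!}{\sum_{i=0}^{m-1} u^{i-m+1}/i!}$ already shown to be increasing on $[0,\infty)$ in the proof of Theorem \ref{thm1} via the Karlin general composition theorem. The only new point is that, unlike $-\ln(\bar F_X(t))$, the argument $-\ln(F_X(t))$ is \emph{decreasing} in $t$; composing an increasing $K$ with a decreasing argument makes the ratio decreasing in $t$, which is precisely $X_{L_m}\preceq_b X_{L_n}$. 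This is where the hypothesis $m\geq n$ is consumed.

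For the second step, I would write, using Eq. (\ref{rhrlwr1}) and Eq. (\ref{rhrlwr2}), $\frac{r_{X_{L_n}}(t)}{r_{Y_{L_n}}(t)}=\frac{r_X(t)}{r_Y(t)}\cdot\frac{\sum_{i=0}^{n-1}(-\ln(F_Y(t)))^{i-n+1}/i!}{\sum_{i=0}^{n-1}(-\ln(F_X(t)))^{i-n+1}/i!}$. Since $X\preceq_b Y$ makes $r_X/r_Y$ decreasing, it suffices to show the second factor is decreasing in $t$. Exactly as in Theorem \ref{thm1}, I would rewrite it as $E[\Phi(I(t),t)]$ with $\Phi(i,t)=\rho(t)^{\,i-n+1}$, $\rho(t):=(-\ln(F_Y(t)))/(-\ln(F_X(t)))$, and $I(t)$ the discrete variable with pmf proportional to $(-\ln(F_X(t)))^{i-n+1}/i!$ on $\{0,\dots,n-1\}$. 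Two monotonicity inputs are needed: (a) from $X\preceq_b Y$ I would deduce that $\rho(t)$ is \emph{increasing} in $t$ (the reversed-hazard, tail-integral companion of Lemma 2.1 of Khaledi et al. \cite{Khaledi2009}), which follows because for $s>t$ the sign of $r_X(t)r_Y(s)-r_Y(t)r_X(s)$ is nonnegative and this is exactly the integrand controlling $\rho'(t)$; since each exponent $i-n+1\leq 0$, this makes $\Phi(i,t)$ decreasing in $t$; and (b) from $X\preceq_{st}Y$, i.e. $F_X\geq F_Y$, I get $-\ln(F_Y(t))\geq -\ln(F_X(t))$, so $\rho(t)\geq 1$ and hence $\Phi(i,t)$ is increasing in $i$. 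Because the factor $(-\ln(F_X(t)))^{-n+1}$ cancels in the normalization, the pmf is proportional to $(-\ln(F_X(t)))^{i}/i!$, which is $TP_2$ in $(i,-\ln(F_X(t)))$; composing with the decreasing map $t\mapsto -\ln(F_X(t))$ renders the pmf $RR_2$ in $(i,t)$, so $I(t)$ is stochastically decreasing. Then $\Phi$ increasing in $i$, decreasing in $t$, together with $I(t)$ stochastically decreasing, gives $E[\Phi(I(t),t)]$ decreasing in $t$ by the relevant part of Lemma 2.2 of Misra and van der Meulen \cite{Misra2003} (applied after the reparametrization $t\mapsto -t$, which turns both monotonicities into the increasing form used there).

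The main obstacle is the careful bookkeeping of directions: relative to the upper-record argument every monotonicity is reversed, because $-\ln(F_X(t))$ decreases in $t$ whereas $-\ln(\bar F_X(t))$ increases, and $\preceq_b$ demands a decreasing ratio whereas $\preceq_c$ demands an increasing one. The single genuinely new analytic ingredient is establishing that $\rho(t)$ is increasing under $X\preceq_b Y$; once that and the sign $\rho\geq 1$ from $X\preceq_{st}Y$ are in hand, the $TP_2$/Karlin and Misra--van der Meulen machinery transfers essentially verbatim from the proof of Theorem \ref{thm1}.
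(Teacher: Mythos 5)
Your proposal is correct and follows essentially the same three-step route as the paper's own proof: the intra-distribution comparison $X_{L_m}\preceq_b X_{L_n}$ via the increasing function $K$ composed with the decreasing argument $-\ln(F_X(t))$, the same-index comparison $X_{L_n}\preceq_b Y_{L_n}$ via the expectation representation $E[\Phi(I(t),t)]$ with the $RR_2$ pmf and the Misra--van der Meulen lemma, and transitivity. The only difference is cosmetic: where the paper cites Hazra and Misra for the monotonicity of $(-\ln F_Y(t))/(-\ln F_X(t))$ under $X\preceq_b Y$, you sketch a correct first-principles derivation via the sign of $r_X(t)r_Y(s)-r_Y(t)r_X(s)$ in the tail integrals.
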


\begin{proof}
Firstly, we demonstrate that $X_{L_m}\preceq_{b} X_{L_n}$ holds true for $n\in \mathbb{N}$ and for every $m\geq n$ such that
$m\in \mathbb{N}$. Then, it will be proved that $X_{L_n}\preceq_{b} Y_{L_n}$. Due to the transitivity property of the relative
reversed hazard rate order, it is deduced that $X_{L_m}\preceq_{b} Y_{L_n}$. Keeping the notations of Theorem \ref{thm1} in mind,
and in view of (\ref{rhrlwr1}), we can write
$$\frac{r_{X_{L_m}}(t)}{r_{X_{L_n}}(t)}=\frac{(n-1)!}{(m-1)!}K(-\ln(F_X(t))),~~t\geq 0.$$
In the proof of Theorem \ref{thm1}, it was shown that $K(u)$ in increasing in $u\geq 0$.  Hence, since $-\ln(F_X(t))$ is decreasing in $t>0$, thus
$\frac{r_{X_{L_m}}(t)}{r_{X_{L_n}}(t)}$ is decreasing in $t>0$. Therefore, $X_{L_m}\preceq_{b} X_{L_n}$ for every $m=n,n+1,\ldots$. To finalize the
proof, we need to prove $X_{L_n}\preceq_{b} Y_{L_n}$, for every $n\in \mathbb{N}.$ In the spirit of Eq. (\ref{rhrlwr1}) and Eq. (\ref{rhrlwr2}), we
get
$$\frac{r_{X_{L_n}}(t)}{r_{Y_{L_n}}(t)}=\frac{r_{X}(t)}{r_{Y}(t)}\cdot\frac{\sum_{i=0}^{n-1} (-\ln(F_{Y}(t)))^{i-n+1} /i!}
{\sum_{i=0}^{n-1} (-\ln(F_{X}(t)))^{i-n+1} /i!}.$$
Following assumption, we have $X \preceq_{b} Y$. Hence $r_X(t)/r_Y(t)$ is decreasing in $t>0$. It thus suffices to establish that
$$\frac{\sum_{i=0}^{n-1} (-\ln(F_{Y}(t)))^{i-n+1} /i!}
{\sum_{i=0}^{n-1} (-\ln(F_{X}(t)))^{i-n+1} /i!}~~\textmd{is decreasing in} ~~t> 0.$$
Let us write
\begin{align*}
\frac{\sum_{i=0}^{n-1} (-\ln(F_{Y}(t)))^{i-n+1} /i!}{\sum_{i=0}^{n-1} (-\ln(F_{X}(t)))^{i-n+1} /i!}&=
\sum_{i=0}^{n-1} \left( \frac{-\ln(F_{Y}(t))}{-\ln(F_{X}(t))} \right)^{i-n+1}\cdot\frac{(-\ln(F_{X}(t)))^{i-n+1} /i!}
{\sum_{i=0}^{n-1} (-\ln(F_{X}(t)))^{i-n+1} /i!}\\
&=E[\Phi_{\ast}(I_{\ast}(t),t)],
\end{align*}
where $\Phi_{\ast}(i,t):=\left(\frac{-\ln(F_{Y}(t))}{-\ln(F_{X}(t))} \right)^{i-n+1}$ and $I_{\ast}(t)$ is an rv
with pmf
$$f_{\ast}(i|t)=\frac{(-\ln(F_{X}(t)))^{i-n+1} /i!}
{\sum_{i=0}^{n-1} (-\ln(F_{X}(t)))^{i-n+1} /i!}, i=0,1,\ldots,n-1.$$
Since $X\preceq_{b} Y,$ thus from Hazra and Misra \cite{Hazra2020}, $\frac{-\ln(F_{Y}(t))}{-\ln(F_{X}(t))}$ is increasing in
$t>0$. Thus, $\Phi_{\ast}(i,t)$ is decreasing in $t>0$. From assumption, it holds that $X \preceq_{st} Y$, and as a result, for all $t>0$, $-\ln(F_{Y}(t))\geq -\ln(F_{X}(t)).$
Therefore, $\Phi_{\ast}(i,t)$ is increasing in $i=0,1,\ldots,n-1$, for all$t>0.$ It can be seen readily that $f_{\ast}(i|t)$ is $RR_2$ in $(i,t) \in \{1,2\} \times (0,+\infty)$, which in turn yields $I_{\ast}(t_1) \succeq_{st} I_{\ast}(t_2),$
for all $t_1\leq t_2$. On applying Lemma 2.2(ii) of Misra and van der Meulen \cite{Misra2003}, $E[\Phi_{\ast}(I_{\ast}(t),t)]$ is decreasing in $t>0$. Now, the result is proved.
\end{proof}

The following example presents a situation where Theorem \ref{thm3} is applicable.

\begin{example}\label{exa777}
We suppose that $X$ and $Y$ have inverse Weibull distribution with cdfs $F_X(t)=\exp(-\frac{4}{t^2})$ and
$F_Y(t)=\exp(-\frac{5}{t^2}),$ where $t>0.$ We can get $r_X(t)=\frac{8}{t^3}$ and $r_Y(t)=\frac{10}{t^3}.$
Since $\frac{r_X(t)}{r_Y(t)}=\frac{4}{5},$ thus $X\preceq_{b}Y$. It is also trivial to see that $X\preceq_{st} Y$.
Let us choose $n=2$ and $m=3$. The result of Theorem \ref{thm3} implies that $X_{L_m}\preceq_{b} Y_{L_n}.$ One has the following
$$r_{X_{L_m}}(t)=\frac{r_X(t)}{\xi_m(F_X(t))}=\frac{64}{t^3(t^4 +4t^2+8)}~~\textmd{and}~~r_{Y_{L_n}}(t)=\frac{r_Y(t)}{\xi_n(F_Y(t))}
=\frac{50}{t^3(t^2 +5)}.$$
In Figure \ref{fig4} we plot the graph of $\frac{r_{X_{L_m}}(t)}{r_{Y_{L_n}}(t)}$ and it is shown that this function decreases in $t>0$
which validates the result of Theorem \ref{thm3}.
\end{example}

\begin{figure}
\begin{center}
\includegraphics[trim= 1cm 1cm 1cm 1cm,height=6cm, angle=0]{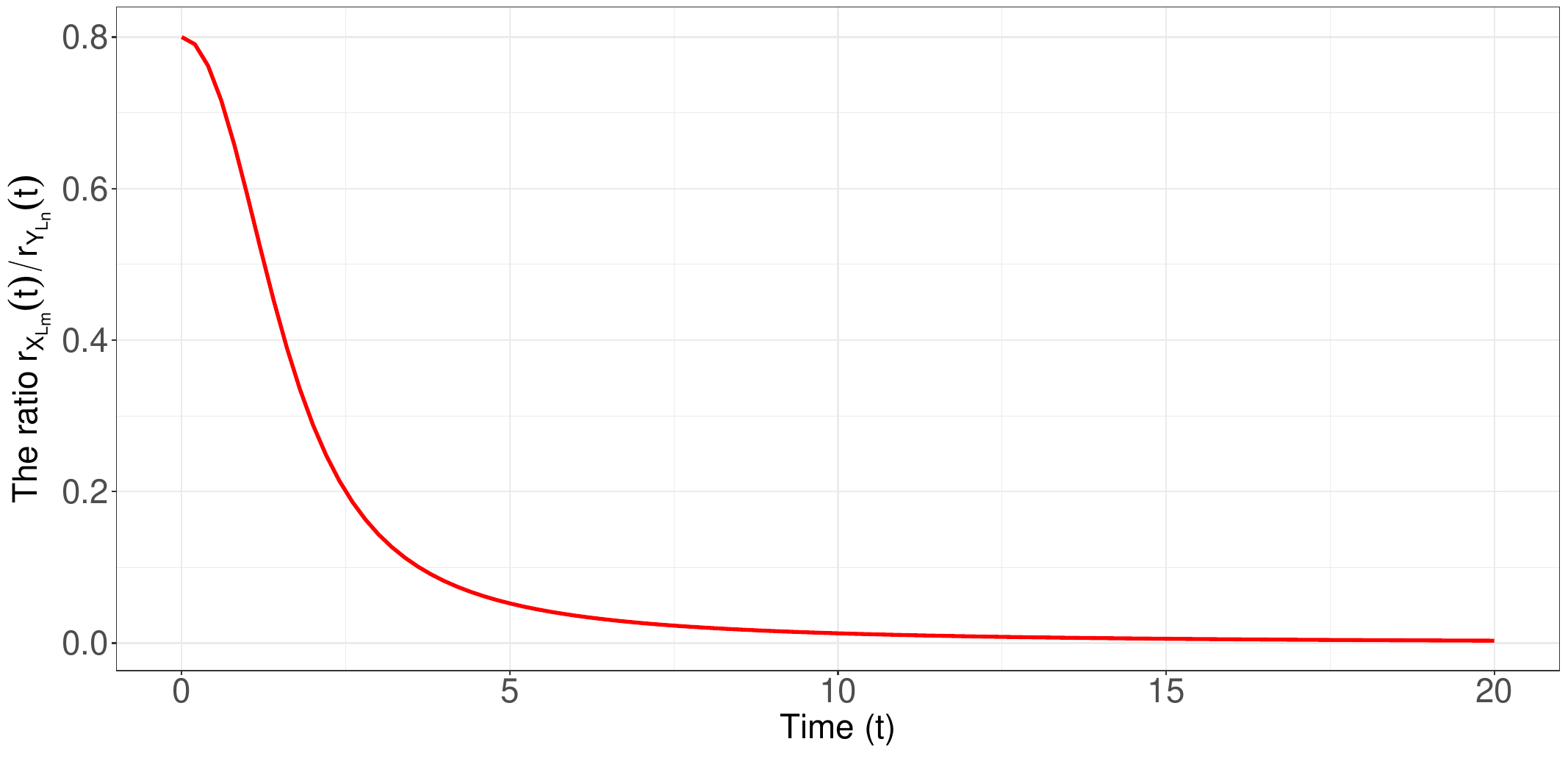}
\end{center}
\caption{The graph of $\frac{r_{X_{L_m}}(t)}{r_{Y_{L_n}}(t)}$ in Example \ref{exa777}.    \label{fig4}}
\end{figure}

The extremal behaviour of reversed hazard rates ratio is considered here. Let us define
$$c^{\ast}_0=\lim_{t\rightarrow 0^{+}} \frac{r_{X}(t)}{r_Y(t)},~~\textmd{and}~~c^{\ast}_1=\lim_{t\rightarrow +\infty} \frac{r_{X}(t)}{r_Y(t)}.$$
In the sequel, we assume that $c^{\ast}_0$ and $c^{\ast}_1$ are positive and finite. In the following theorem the condition of $X\preceq_{st}Y$ in Theorem \ref{thm3} is relaxed.

\begin{theorem}\label{thm4}
If  $\sup_{0<u<1} \frac{\psi_{n}(u^{\frac{1}{c_0^{\ast}}})}{\psi_{m}(u)} \leq c^{\ast}_1$ then,
$X\preceq_{b}Y$ implies $X_{L_m} \preceq_{b} Y_{L_n}.$
\end{theorem}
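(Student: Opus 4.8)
The plan is to mirror the proof of Theorem \ref{thm2} almost line for line, but working with reversed hazard rates and cumulative distribution functions in place of hazard rates and survival functions, and reversing every monotonicity direction, since $\preceq_b$ is the statement that a ratio is \emph{decreasing}. First I would read off from (\ref{rhrlwr1}) and (\ref{rhrlwr2}) that $r_{X_{L_m}}(t)=r_X(t)/\xi_m(F_X(t))$ and $r_{Y_{L_n}}(t)=r_Y(t)/\xi_n(F_Y(t))$, whence
$$\frac{r_{X_{L_m}}(t)}{r_{Y_{L_n}}(t)}=\frac{r_X(t)}{r_Y(t)}\cdot\frac{\xi_n(F_Y(t))}{\xi_m(F_X(t))}.$$
Because $X\preceq_b Y$ makes $r_X/r_Y$ decreasing, and a product of two positive decreasing functions is decreasing, it suffices to prove that the second factor $\xi_n(F_Y(t))/\xi_m(F_X(t))$ is decreasing in $t>0$.

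Next I would differentiate the logarithm of this factor. Using $f_Z=r_Z F_Z$ and the abbreviation $\psi_k(u)=u\xi'_k(u)/\xi_k(u)$, a direct computation gives
$$\frac{\partial}{\partial t}\ln\left(\frac{\xi_n(F_Y(t))}{\xi_m(F_X(t))}\right)=r_Y(t)\,\psi_n(F_Y(t))-r_X(t)\,\psi_m(F_X(t)),$$
so the goal reduces to establishing the pointwise inequality $r_X(t)\,\psi_m(F_X(t))\geq r_Y(t)\,\psi_n(F_Y(t))$ for all $t>0$.

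To bound the two sides I would exploit that $r_X/r_Y$ is decreasing and hence pinned between $c^{\ast}_1$ and $c^{\ast}_0$, giving the two pointwise bounds $r_X(t)\geq c^{\ast}_1 r_Y(t)$ and $r_X(t)\leq c^{\ast}_0 r_Y(t)$. The second bound, combined with the reversed-hazard-rate representation $F_Z(t)=\exp\bigl(-\int_t^{+\infty} r_Z(x)\,dx\bigr)$ integrated over $[t,+\infty)$, yields $F_X(t)\geq F^{c^{\ast}_0}_Y(t)$ for all $t$. This integration from $t$ to $+\infty$, rather than from $0$ to $t$ as in the survival-function argument of Theorem \ref{thm2}, is the essential structural difference and the step I would treat most carefully. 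Then, using first $r_X\geq c^{\ast}_1 r_Y$ (legitimate because $\psi_m\geq 0$) and then the monotonicity of $\psi_m$ from Lemma \ref{lemm1} together with $F_X\geq F^{c^{\ast}_0}_Y$, I would chain
$$r_X(t)\,\psi_m(F_X(t))\geq c^{\ast}_1 r_Y(t)\,\psi_m(F_X(t))\geq c^{\ast}_1 r_Y(t)\,\psi_m\bigl(F^{c^{\ast}_0}_Y(t)\bigr).$$
Consequently the desired inequality holds once $c^{\ast}_1\,\psi_m\bigl(F^{c^{\ast}_0}_Y(t)\bigr)\geq \psi_n(F_Y(t))$, that is, once $\psi_n(F_Y(t))/\psi_m\bigl(F^{c^{\ast}_0}_Y(t)\bigr)\leq c^{\ast}_1$ for every $t>0$.

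Finally I would convert this into the stated hypothesis by the substitution $u=F^{c^{\ast}_0}_Y(t)$, so that $F_Y(t)=u^{1/c^{\ast}_0}$ and $u$ sweeps out $(0,1)$; this gives
$$\sup_{t>0}\frac{\psi_n(F_Y(t))}{\psi_m\bigl(F^{c^{\ast}_0}_Y(t)\bigr)}=\sup_{0<u<1}\frac{\psi_n\bigl(u^{1/c^{\ast}_0}\bigr)}{\psi_m(u)},$$
which is $\leq c^{\ast}_1$ exactly by assumption, completing the argument. Beyond the bookkeeping of inequality directions and exponents, the main obstacle is getting the reversed-hazard integral relation $F_X\geq F^{c^{\ast}_0}_Y$ correct and verifying that it pairs with the lower bound $r_X\geq c^{\ast}_1 r_Y$ to reproduce precisely the supremum appearing in the theorem.
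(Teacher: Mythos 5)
Your proof is correct and follows essentially the same route as the paper: the same factorization of $r_{X_{L_m}}(t)/r_{Y_{L_n}}(t)$, the same logarithmic-derivative identity $r_Y(t)\psi_n(F_Y(t))-r_X(t)\psi_m(F_X(t))$, the pinned bounds $c^{\ast}_1\leq r_X(t)/r_Y(t)\leq c^{\ast}_0$, the integrated reversed-hazard relation between $F_X$ and $F_Y$, and the nonnegativity and monotonicity of $\psi_k$ from Lemma \ref{lemm1}. The only (immaterial) difference is the direction of elimination: the paper replaces $r_Y$ by $r_X/c^{\ast}_1$ and $F_Y$ by $F_X^{1/c^{\ast}_0}$ and substitutes $u=F_X(t)$, whereas you replace $r_X$ by $c^{\ast}_1 r_Y$ and $F_X$ by $F_Y^{c^{\ast}_0}$ and substitute $u=F_Y^{c^{\ast}_0}(t)$, both reductions arriving at the identical supremum condition.
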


\begin{proof}
In view of Eq. (\ref{rhrlwr1}) and Eq. (\ref{rhrlwr2}), for all $t>0,$ we get
$$r_{X_{L_m}}(t)=\frac{r_X(t)}{\xi_m(F_X(t))},~~r_{Y_{L_n}}(t)=\frac{r_Y(t)}{\xi_n(F_Y(t))}.$$
Therefore,
$$\frac{r_{X_{L_m}}(t)}{r_{Y_{L_n}}(t)}=\frac{r_X(t)}{r_{Y}(t)}\cdot\frac{\xi_n(F_Y(t))}{\xi_m(F_X(t))}.$$
By assumption we have $X\preceq_{b}Y$ which means $\frac{r_X(t)}{r_{Y}(t)}$ is decreasing. Thus, we only need to show that $\frac{\xi_n(F_Y(t))}{\xi_m(F_X(t))}$ is decreasing in $t\geq 0$, or equivalently, we can prove that
\begin{equation}\label{eq88}
\frac{\partial}{\partial t}\ln\left( \frac{\xi_n(F_Y(t))}{\xi_m(F_X(t))}\right) \leq 0,~~\textmd{for all}~t>0.
\end{equation}
One has
\begin{align*}
\frac{\partial}{\partial t}\ln\left( \frac{\xi_n(F_Y(t))}{\xi_m(F_X(t))}\right)&=\frac{f_Y(t)\xi'_n(F_Y(t))}{\xi_n(F_Y(t))}
-\frac{f_{X}(t)\xi'_m(F_X(t))}{\xi_m(F_X(t))}\\
&=\frac{f_Y(t)}{F_Y(t)}\frac{F_Y(t)\xi'_n(F_Y(t))}{\xi_n(F_Y(t))}
-\frac{f_{X}(t)}{F_X(t)}\frac{F_{X}(t)\xi'_m(F_X(t))}{\xi_m(F_X(t))}\\
&=r_Y(t)\cdot\frac{F_{Y}(t)\xi'_n(F_Y(t))}{\xi_n(F_Y(t))}-r_X(t)\cdot\frac{F_X(t)\xi'_m(F_X(t))}{\xi_m(F_X(t))}.
\end{align*}

Since $\frac{r_X(t)}{r_{Y}(t)}$ is decreasing in $t>0$, thus $\frac{r_X(t)}{r_{Y}(t)}\leq c^{\ast}_0$, for all $t>0,$ and
$\frac{r_X(t)}{r_{Y}(t)}\geq c^{\ast}_1$, for all $t>0.$ Hence, $r_X(t)\leq c^{\ast}_0 r_Y(t),$ for all $t>0$ and
also $F_Y(t)\leq F^{\frac{1}{c_0^{\ast}}}_{X}(t),$ for all $t>0$ where the second inequality follows from the identities
$F_X(t)=e^{-\int_t^{+\infty} r_X(x) dx}$ and $F_Y(t)=e^{-\int_t^{+\infty} r_Y(x) dx}$. This is because if $r_X(t) \leq c^{\ast}_0 r_Y(t),$ for all $t>0$, then  $F_Y(t)\leq F^{\frac{1}{c_0^{\ast}}}_{X}(t),$ for all $t>0.$ Therefore,

\begin{align*}
\frac{\partial}{\partial t}\ln\left(\frac{\xi_n(F_Y(t))}{\xi_m(F_X(t))}\right)&=
r_Y(t)\cdot\frac{F_{Y}(t)\xi'_n(F_Y(t))}{\xi_n(F_Y(t))}-r_X(t).\frac{F_X(t)\xi'_m(F_X(t))}{\xi_m(F_X(t))}\\
&\leq r_{X}(t)\cdot\left(\frac{1}{c_1^{\ast}}\frac{F_{Y}(t)\xi'_n(F_Y(t))}{\xi_n(F_Y(t))}
-\frac{F_X(t)\xi'_m(F_X(t))}{\xi_m(F_X(t))}\right)\\
&\leq r_{X}(t)\cdot\left(\frac{1}{c_1^{\ast}}\frac{F^{\frac{1}{c_0^{\ast}}}_{X}(t)\xi'_n(F^{\frac{1}{c_0^{\ast}}}_{X}(t))}{\xi_n(F^{\frac{1}{c_0^{\ast}}}_{X}(t))}
-\frac{F_X(t)\xi'_m(F_X(t))}{\xi_m(F_X(t))}\right)\\
&=r_{X}(t)\cdot\left(\frac{1}{c_1^{\ast}}\psi_n(F^{\frac{1}{c_0^{\ast}}}_{X}(t))-\psi_m(F_X(t))\right)
\end{align*}
where the first inequality holds because $r_Y(t)\leq \frac{1}{c^{\ast}_1} r_X(t),$ for all $t>0$ and the last inequality is due to $F_Y(t)\leq F^{\frac{1}{c_0^{\ast}}}_{X}(t),$ for all $t>0,$ and
since by Lemma \ref{lemm1}, $\frac{u\xi'_{n}(u)}{\xi_n(u)}$ is increasing in $u\in (0,1).$  It is now realized that (\ref{eq88}) is
fulfilled if $c^{\ast}_1\psi_m (F_{X}(t))\geq \psi_n(F^{\frac{1}{c}}_X(t)),$ for all $t>0$. We know that $\psi_m$ is a non-negative function, and
further, it is trivial that $\sup_{t\geq 0} \frac{\psi_{n}(F^{\frac{1}{c_0 ^{\ast}}}_X(t))}{\psi_{m}(F_{X}(t))}=\sup_{0<u<1} \frac{\psi_{n}(u)}{\psi_{m}(u^{c_1})}$ thus
the recent inequality holds if, and only if,
$$\sup_{0<u<1} \frac{\psi_{n}(u^{\frac{1}{c_0 ^{\ast}}})}{\psi_{m}(u)} \leq c^{\ast}_1.$$
\end{proof}

It is remarkable here that when $X$ and $Y$ are equal in distribution
then, $X \preceq_{st} Y$ and further $X \preceq_{b} Y$. In the spirit of Remark \ref{rem1}, since
$\psi_n(u)$ is increasing in $u$ for every $u\in [0,1],$ and because $c_0^{\ast}=c_{1}^{\ast}=1$, thus  for every $m=n,n+1,\ldots,$
one observes that $$\sup_{0<u<1} \frac{\psi_{n}(u^{\frac{1}{c_0 ^{\ast}}})}{\psi_{m}(u)}=\sup_{0<u<1} \frac{\psi_{n}(u)}{\psi_{m}(u)}\leq 1.$$
Hence, using Theorem \ref{thm3}, one obtains $X_{L_m} \preceq_{b} X_{L_n}.$ This result could also be concluded by applying Theorem \ref{thm2}.\medskip

The next example illustrates a situation where Theorem \ref{thm3} is not applicable while the result
of Theorem \ref{thm4} works.

\begin{example}\label{exa7777}
Let $X$ and $Y$ be two random lifetimes with Inverse Weibull distributions having cdfs $F_X(t)=\exp(-\frac{4}{t^4})$ and
$F_Y(t)=\exp(-\frac{2}{t^4}).$ The rhr functions of $X$ and $Y$ are obtained as $r_X(t)=\frac{16}{t^5}$ and $r_Y(t)=\frac{8}{t^5}$, which further implies that $\frac{r_X(t)}{r_Y(t)}=2.$ The function $\frac{r_X(t)}{r_Y(t)}$ is, therefore, non-increasing in $t>0$.
By definition, $X\preceq_{b} Y$. We can see that $c^{\ast}_0=c_1^{\ast}=\lim_{t\rightarrow 0^{+}} \frac{r_X(t)}{r_Y(t)}=2$. For all $>0$, $r_X(t) \geq r_Y(t)$ i.e., $X\succeq_{rh}Y$ and moreover $X\succeq_{st} Y$. This means that the condition of Theorem \ref{thm3} is not satisfied.
Here, we take $n=2$ and $m=3$. By using Theorem \ref{thm4} we shall conclude that $X_{L_m}\preceq_{b} Y_{L_n}.$
For $u\in [0,1]$, in the sprit of the proof of Theorem \ref{thm2} in Example \ref{exa7}, one gets
$$\frac{\psi_n(u^{\frac{1}{c_0^{\ast}}})}{\psi_m(u)}=\frac{\psi_n(u^{\frac{1}{2}})}{\psi_m(u)}=\frac{(-\ln(u))^2 -2\ln(u)+2}{(4-2\ln(u)).(\frac{1}{2}-\frac{1}{4}\ln(u))}$$
We can check using website \emph{https://www.dcode.fr/maximum-function} that $\sup_{0\leq u\leq 1} \frac{\psi_{n}(\sqrt{u})}{\psi_{m}(u)}=2$ as is
shown in Figure \ref{fig22}. The condition of Theorem \ref{thm4} holds and the result follows.
\end{example}
In Example \ref{exa7777}, we can obtain the rhr functions of $X_{L_m}$ and $Y_{L_n}$ as follows:
$$r_{X_{L_m}}(t)=\frac{r_X(t)}{\xi_m(F_X(t))}=\frac{256}{t^4(16+8t^4+2t^8)},$$
and
$$r_{Y_{L_n}}(t)=\frac{r_Y(t)}{\xi_n(F_Y(t))}=\frac{16(t^4 +2}{16+8t^4+2t^8}.$$
In Figure \ref{fig33} we have plotted the graph of $\frac{r_{X_{L_m}}(t)}{r_{Y_{L_n}}(t)}$ to exhibit that it decreases
in $t$. \bigskip

\begin{figure}
\begin{center}
\includegraphics[trim= 1cm 1cm 1cm 1cm,height=6cm, angle=0]{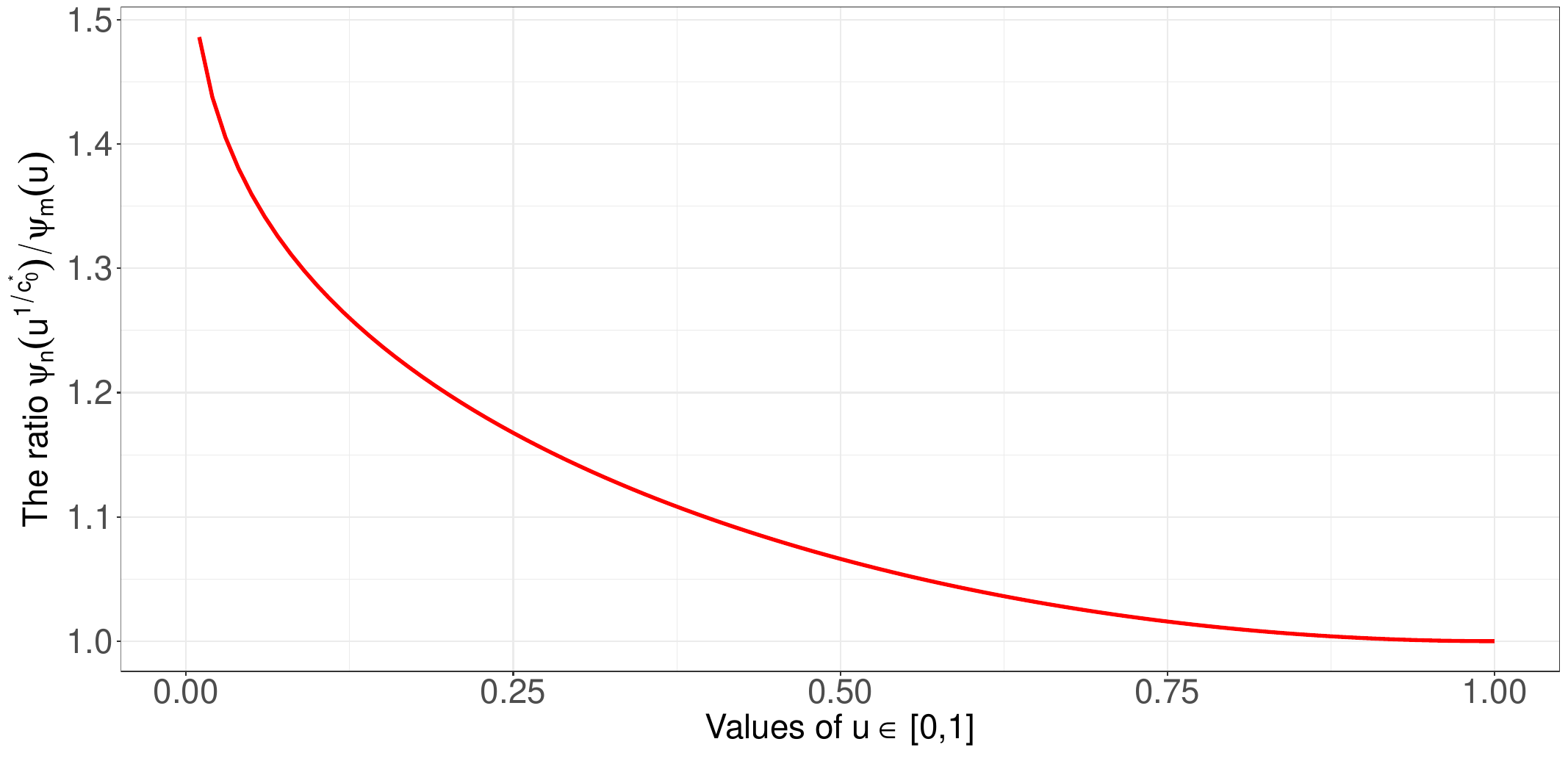}
\end{center}
\caption{The graph of the function $\frac{\psi_n(u^{\frac{1}{c_0^{\ast}}})}{\psi_m(u)}$ in Example \ref{exa7777}.    \label{fig22}}
\end{figure}

\begin{figure}
\begin{center}
\includegraphics[trim= 1cm 1cm 1cm 1cm,height=6cm, angle=0]{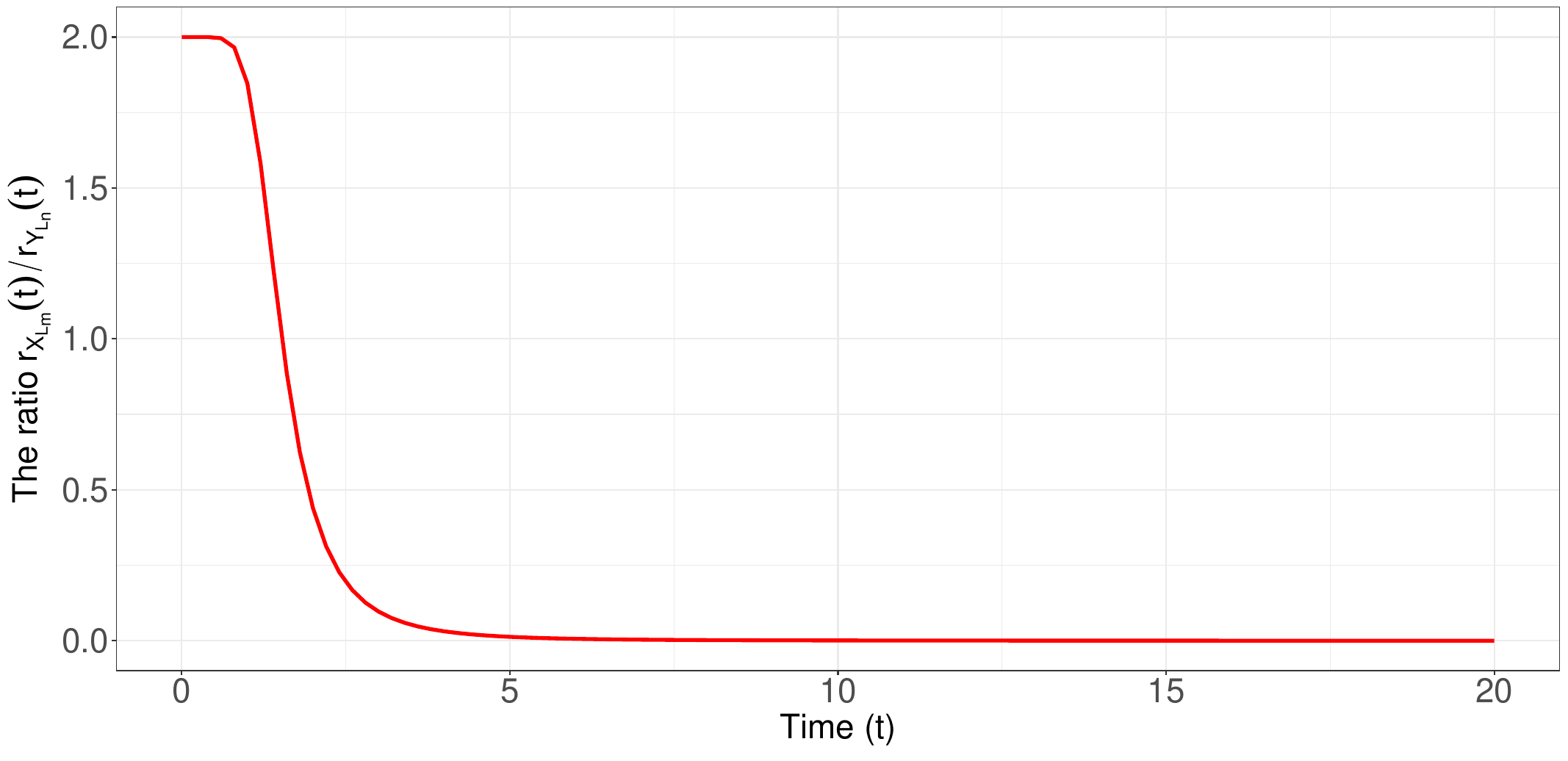}
\end{center}
\caption{The graph of $\frac{r_{X_{L_m}}(t)}{r_{Y_{L_n}}(t)}$ in Example \ref{exa7777}.    \label{fig33}}
\end{figure}

\section{Conclusion}
\label{sec:5}
In this paper, we first presented conditions under which the relative hazard rate order between $X$ and $Y$ with the respective distributions $F$ and $G$ is translated into the relative hazard rate order of the upper record values $X_{U_m}$ and $Y_{U_n}$ resulting from sequences of random lifetimes following the base distributions $F$ and $G$. The first result states that if $m \geq n$ and $X\succeq_{st} Y$, then $X\preceq_{c} Y$ implies $X_{U_m}\preceq_{c} Y_{U_n}$, which means that if $X$ ages faster than $Y$ in terms of the hazard rate function, then $X_{U_m}$ ages correspondingly faster than $Y_{U_n}$. In situations where $X_{U_m}$ and $Y_{U_n}$ have an IFR distribution, $X_{U_m}\preceq_{c} Y_{U_n}$ means that the hazard rate function of $X_{U_m}$ increases faster than the hazard rate function of $Y_{U_n}$. It is also concluded that if $Y_{U_n}\in IFR $ then $X_{U_m}\in IFR$. It has been shown that $X\succeq_{st} Y$ is not a necessary condition to obtain the preservation property of order $\preceq_{c}$ under upper record values. We have secondly given conditions under which the relative reversed hazard rate order between $X$ and $Y$ is translated into the relative reversed hazard rate order of lower record values $X_{L_m}$ and $Y_{L_n}$. We have shown that if $m \geq n$ and $X\preceq_{st} Y$, then $X\preceq_{b} Y$ results in $X_{L_m}\preceq_{b} Y_{L_n}$. If then $X_{L_m}$ and $Y_{L_n}$ have a DRHR distribution, then $X_{L_m}\preceq_{b} Y_{L_n}$ is equivalent to saying that the reversed hazard rate function of $X_{L_m}$ decreases faster than the reversed hazard rate function of $Y_{L_n}$. In this context, if $Y_{L_n}\in DRHR$ then $X_{L_m} \in DRHR$. We have also proved that $X\preceq_{st} Y$ is not a necessary condition to obtain the preservation property of order $\preceq_{b}$ under lower record values. \medskip

The results obtained in this study can be used to describe further findings on record values. The hazard rate function and reversed hazard rate function of a lifetime random variable $Z$ are proportional to the probabilities $P(X\leq t+\delta|X> t)$ and $P(X> t-\delta|X\leq t)$ respectively, where $\delta$ is a very small positive number. Usually it is important to predict future upper or lower records. Therefore, it is a natural question whether a record higher than $t$ will be reached directly after $t$. Let us assume $X$ and $Y$ have the cdfs $F$ and $G$ respectively. For example,
$$X_{U_m}\preceq_{c}Y_{U_n} ~\Rightarrow~\frac{P(X_{U_m}\leq t_1+\delta|X_{U_m}>t_1)}{P(Y_{U_n}\leq t_1+\delta|Y_{U_n}>t_1)} \leq \frac{P(X_{U_m}\leq t_2+\delta|X_{U_m}>t_2)}{P(Y_{U_n}\leq t_2+\delta|Y_{U_n}>t_2)},~~\textmd{for all}~t_1\leq t_2 \in \mathbb{R}^{+},$$
indicates that if the $m$th upper record from the sequence of i.i.d. lifetimes $X_1,X_2,\ldots$ from $F$ (i.e., $X_{U_m}$) has not retained until
time $t_2$ and also if the $n$th upper record from the sequence of i.i.d. lifetimes $Y_1,Y_2,\ldots$ from $G$ (i.e., $Y_{U_n}$) has not retained until
time $t_2$, then it is more likely in the time $t_2$ in comparison with the time $t_1$ that $X_{U_m}$ is equal to a value in a small neighborhood after $t_2$ than that $Y_{U_n}$ to be equal to such value in a small neighborhood after $t_2$.  Moreover, there is another question whether a lower record less than $t$ has retained right before $t$. We observe that
$$X_{L_m}\preceq_{b}Y_{L_n} ~\Rightarrow~\frac{P(X_{L_m}> t_1-\delta|X_{L_m}\leq t_1)}{P(Y_{L_n}> t_1-\delta|Y_{L_n}\leq t_1)} \geq \frac{P(X_{L_m}> t_2-\delta|X_{L_m}\leq t_2)}{P(Y_{L_n}> t_2-\delta|Y_{L_n}\leq t_2)},~~\textmd{for all}~t_1\leq t_2 \in \mathbb{R}^{+},$$
which shows that if the $m$th lower record from the sequence of i.i.d. lifetimes $X_1,X_2,\ldots$ from $F$ (i.e., $X_{L_m}$) has retained until
time $t_1$ and further if the $n$th lower record from the sequence of i.i.d. lifetimes $Y_1,Y_2,\ldots$ from $G$ (i.e., $Y_{L_n}$) has also retained before time $t_2$, then it is more likely in the time $t_1$ compared with the time $t_2$ that $X_{L_m}$ is equal to a value in a small neighborhood before $t_1$ than that $Y_{L_n}$ to be equal to such value in a small neighborhood before $t_1$.\medskip

In future research, one may consider simplifying the condition of Theorem \ref{thm2} with respect to $n,m,c_0$ and $c_1$ and also simplifying the condition of Theorem \ref{thm4} on the basis of $n,m,c_0^{\ast}$ and $c_1^{\ast}$. The existence of the supremum in
Theorem \ref{thm2} was guaranteed in Remark \ref{rem1} for the case that $X$ and $Y$ are equally distributed, where $c_0=c_1=1$
and also after the proof of Theorem \ref{thm4} it was guaranteed that the supremum exists in the cases where $X$ and $Y$ have a common distribution leading to $c^{\ast}_0=c^{\ast}_1=1$. However, as explained in Example \ref{exa7} and Example \ref{exa7777}, there are also more complicated situations in which the suprema exists. Further implications of the relative ordering of dataset values, e.g. finding bounds for the survival functions of the upper datasets and the distribution function of the lower datasets, can also be considered in the future.
\bigskip

\noindent \textbf{Acknowledgments}\medskip

\noindent This work was supported by Researchers Supporting Project number (RSP2024R392), King Saud University,
Riyadh, Saudi Arabia.\textbf{\medskip }

%
%
%

\noindent \textbf{Conflict of interest}\textbf{\medskip }

 On behalf of all authors, the corresponding author states that there is no conflict of interest. \bigskip


\begin{thebibliography}{99}
\bibliographystyle{achemso}

\bibitem{Abouammoh2003}
Abouammoh, A. R. and Qamber, I. S. (2003). New better than renewal-used classes of life distributions. \emph{IEEE Transactions on Reliability} 52(2), 150--153.

\bibitem{Ahsanullah1995}
Ahsanullah, M. (1995). \emph{Record Statistics}. New York: Nova Science Publishers.

\bibitem{Ahmadi2001}
Ahmadi, J., Arghami, N. R. (2001). Some univariate stochastic orders on record values.
\emph{Communications in Statistics-Theory and Methods} 30, 69-78.

\bibitem{Arriaza2017}
Arriaza, A., Sordo, M.A. and Su\'{a}rez-Liorens, A. (2017). Comparing residual lives and
inactivity times by transform stochastic orders. \emph{IEEE Transactions on Reliability} 66, 366--
372.

\bibitem{Arnold1998}
Arnold, B. C., Balakrishnan, N., Nagaraja, H. N. (1998). \emph{Records}. New York: John Wiley.

\bibitem{Arnold2008}
Arnold, B. C., Balakrishnan, N. and Nagaraja, H. N. (2008). \emph{A first course in order statistics}, volume 54 of Classics in Applied Mathematics.

\bibitem{Aswin2023}
Aswin, I. C., Sankaran, P. G. and Sunoj, S. M. (2023). Some reliability aspects of record values using quantile functions. \emph{Communications in Statistics-Theory and Methods} 1-21.

\bibitem{Barlow1975}
Barlow, R. E. and Proschan, F. (1975). \emph{Statistical theory of reliability and life testing}. New York: Holt,
Rinehart and Winston.

\bibitem{Bartoszewicz1985}
Bartoszewicz, J. (1985). Dispersive ordering and monotone failure rate distributions. \emph{Advances in
Applied Probability} 17, 472--474.

\bibitem{Belzunce2001}
Belzunce, F., Lillo, R. E., Ruiz, J. M., Shaked, M. (2001). Stochastic comparisons of
nonhomogeneous processes. \emph{Probability in the Engineering and Informational Sciences} 15, 199--224.

\bibitem{Bon1999}
Bon, J. L. and P\~{a}lt\~{a}nea, E. (1999). Ordering properties of convolutions of exponential random variables. \emph{Lifetime Data Analysis} 5, 185--192.

\bibitem{Champlin1983}
Champlin, R., Mitsuyasu, R., Elashoff, R. and Gale, R.P. (1983). \emph{Recent advances in bone
marrow transplantation}. In: UCLA Symposia on Molecular and Cellular Biology, ed. R.P.
Gale, New York 7, 141--158.

\bibitem{Cox1972}
Cox, D. R. (1972). Regression models and life-tables. \emph{Journal of the Royal Statistical Society,
Series B} 34, 187--220.

\bibitem{Deshpande1983}
Deshpande, J. V. and Kochar, S. C. (1983). Dispersive ordering is the same as tail-ordering. \emph{Advances in
Applied Probability} 15,  686--687.

\bibitem{Di2000}
Di Crescenzo, A. (2000). Some results on the proportional reversed hazards model. \emph{Statistics
and Probability Letters} 50, 313--321.

\bibitem{Esna-Ashari2023}
Esna-Ashari, M., Balakrishnan, N. and Alimohammadi, M. (2023). HR and RHR orderings of generalized order statistics. \emph{Metrika} 86(1), 131--148.

\bibitem{Finkelstein2006}
Finkelstein, M. (2006). On relative ordering of mean residual lifetime functions. \emph{Statistics
and Probability Letters} 76, 939--944.

\bibitem{Finkelstein2008}
Finkelstein M. (2008). \emph{Failure rate modeling for reliability and risk}. London, Springer.

\bibitem{Hazra2016}
Hazra, N. K. and Nanda, A. K. (2016). On some generalized orderings: In the spirit of
relative ageing. \emph{Communications in Statistics-Theory and Methods} 45, 6165--6181.

\bibitem{Hazra2020}
Hazra, N. K. and Misra, N. (2021). On relative aging comparisons of coherent systems with identically distributed components.
\emph{Probability in the Engineering and Informational Sciences} 35, 481--495.

\bibitem{Hu2006}
Hu, T., Zhuang, W. (2006). Stochastic orderings between p-spacings of generalized order
statistics from two samples. \emph{Probability in the Engineering and Informational Sciences} 20, 465--479.

\bibitem{Kalashnikov1986}
Kalashnikov, V. V. and Rachev, S. T. (1986). \emph{Characterization of queueing models and
their stability}. In: Probability Theory and Mathematical Statistics, eds. Yu. K. Prohorov
et al., VNU Science Press, Amsterdam 2, 37-53.

\bibitem{Karlin1968}
Karlin, S. \emph{Total Positivity}, Stanford University Press, 1968.

\bibitem{Kayid2017}
Kayid, M., Izadkhah, S. and Zuo, M. J. (2017). Some results on the relative ordering of two
frailty models. \emph{Statistical Papers} 58, 287--301.

\bibitem{Kayid2021}
Kayid, M., Alrubian, A. and Abouammoh, A. (2021). Stochastic analysis of an age-dependent mixture truncation model.
\emph{Naval Research Logistics} 68, 963--972.

\bibitem{Khaledi2005}
Khaledi, B.-E. (2005). Some new results on stochastic orderings between generalized order
statistics. \emph{Journal of the Iranian Statistical Society} 4, 35-49.

\bibitem{Khaledi2009}
Khaledi, B. E., Amiripour, F., Hu, T. and Shojaei, S. R. (2009). Some new results on stochastic comparisons of record values. \emph{Communications in Statistics-Theory and Methods} 38, 2056--2066.

\bibitem{Kochar1987}
Kochar, S. C. and  Wiens, D. P. (1987). Partial orderings of life distributions with respect to their ageing
properties. \emph{Naval Research Logistics} 34,  823--829.

\bibitem{Kochar1999}
Kochar, S. and Ma, C. (1999). Dispersive ordering of convolutions of exponential random variables. \emph{Statistics and Probability Letters} 43, 321--324.

\bibitem{Lai2006}
Lai, C. and Xie, M. (2006). \emph{Stochastic ageing and dependence for reliability}. New York: Springer.

\bibitem{Mantel1988}
Mantel, N. and Stablein, D. M. (1988). The crossing hazard function problem. \emph{Journal of
the Royal Statistical Society, Series D} 37, 59-64.

\bibitem{Marshall2007}
Marshall, A. W. and Olkin, I. (2007). \emph{Life Distributions}. Springer, New York.

\bibitem{Misra2003}
Misra, N. and van der Meulen, E. C. (2003). On stochastic properties of m-spacings. \emph{Journal of Statistical Planning and Inference} 115, 683--697.

\bibitem{Misra2017}
Misra, N., Francis, J. and Naqvi, S. (2017). Some sufficient conditions for relative aging of
life distributions. \emph{Probability in the Engineering and Informational Sciences} 31, 83-99.

\bibitem{Misra2018}
Misra, N. and Francis, J. (2018). Relative aging of $(n-k+1)$-out-of-$n$ systems based on
cumulative hazard and cumulative reversed hazard functions. \emph{Naval Research Logistics} 65,
566--575.

\bibitem{Nanda2017}
Nanda, A. K., Hazra, N. K., Al-Mutairi, D. K. and Ghitany, M. E. (2017). On some generalized
ageing orderings. \emph{Communications in Statistics-Theory and Methods} 46, 5273--5291.

\bibitem{Navarro2024}
Navarro, J. and Sarabia, J. M. (2024). A note on the limiting behaviour of hazard rate functions of generalized mixtures. \emph{Journal of Computational and Applied Mathematics} 435, 114653.

\bibitem{Pocock1982}
Pocock, S. J., Gore, S. M. and Keer, G. R. (1982). Long-term survival analysis: the curability
of breast cancer. \emph{Statistics in Medicine} 1, 93--104.

\bibitem{Raqab1996}
Raqab, M. Z., Amin, W. A. (1996). Some ordering results on order statistics and record
values. \emph{IAPQR Transactions, Journal of the Indian Association for Productivity, Quality and Reliability }21, 1-8.

\bibitem{Raqab2008}
Raqab, M. Z., Asadi, M. (2008). On the mean residual life of records. \emph{Journal of Statistical Planning and Inference}
138, 3660--3666.

\bibitem{R2015}
Rezaei, M., Gholizadeh, B. and Izadkhah, S. (2015). On relative reversed hazard rate order.
\emph{Communications in Statistics-Theory and Methods} 44, 300--308.

\bibitem{Sengupta1994}
Sengupta, D. and Deshpande, J. V. (1994). Some results on the relative ageing of two life
distributions. \emph{Journal of Applied Probability} 31, 991--1003.

\bibitem{Shaked2007}
Shaked, M. and Shanthikumar, J. G. (Eds.). (2007). \emph{Stochastic orders}. New York, NY: Springer New York.
\end{thebibliography}
\end{document}